\definecolor{eqcolor}{RGB}{0,0,180}        % dark blue
\definecolor{lemcolor}{RGB}{0,128,0}       % dark green
\definecolor{thmcolor}{RGB}{180,0,0}       % dark red
\definecolor{defcolor}{RGB}{128,0,128}     % purple
\crefname{equation}{Eq.}{Eqs.}
\crefname{theorem}{Theorem}{Theorems}
\crefname{lemma}{Lemma}{Lemmas}
\crefname{definition}{Definition}{Definitions}
\crefname{prop}{Proposition}{Propositions}
\newcommand{\defref}[1]{\textcolor{defcolor}{Definition~\ref{#1}}}
\numberwithin{equation}{section}
\journalname{}  % Will be filled by the editor
\title{Nash Equilibrium of Bi-objective Optimal Control of Fractional Space-Time Parabolic PDE}
\author{Kedarnath Buda \and
        B.V. Rathish Kumar \and
        Anil Rathi
}
\institute{Kedarnath Buda \at
              Department of Mathematics and Statistics,\\
              Indian Institute of Technology Kanpur, Kanpur 208016, India\\
              \email{kedarnath.buda@gmail.com}
           \and
           B.V. Rathish Kumar (Corresponding Author) \at
              Department of Mathematics and Statistics,\\
              Indian Institute of Technology Kanpur, Kanpur 208016, India\\
              \email{bvrk.paper.1707@gmail.com}
           \and
           Anil Rathi \at
              Department of Mathematics and Statistics,\\
              Indian Institute of Technology Kanpur, Kanpur 208016, India\\
              \email{anil.rathi.1707@gmail.com}
}
\begin{document}

\maketitle

\begin{abstract}
% We establish the existence and uniqueness of the Nash equilibrium for a bi-objective optimal control problem governed by a fractional parabolic equation involving the fractional Laplacian of order $s \in (0,1)$ and the Caputo fractional derivative of order $\gamma \in (0,1)$. The problem is modeled as a distributed control problem with quadratic cost functionals. Numerical results validate the theoretical estimates and demonstrate the effectiveness of the proposed computational scheme.
This work investigates the existence and uniqueness of the Nash equilibrium (solutions to competitive problems in which individual controls aim at separate desired states) for a bi-objective optimal control problem governed by a fractional space–time parabolic partial differential equation. The governing equation involves a Caputo fractional derivative with respect to time of order $\gamma \in (0,1)$ and a fractional Laplacian in the spatial variables of order $s \in (0,1)$. The system is associated with two independent controls, aiming at different targets. The problem is formulated as a distributed optimal control with quadratic cost functionals. Existence and uniqueness of the Nash equilibrium are established under convexity and coercivity assumptions. The solution is computed using conjugate gradient algorithms applied iteratively to the discretized optimal control problems. The numerical experiments are consistent with the theoretical estimates and illustrate the efficiency of the proposed scheme.

\keywords{Fractional Laplacian \and Caputo Derivative \and Nash equilibrium \and Weak solution \and Optimal control \and Bi-objective optimization}

\end{abstract}
% \subclass{49J53 \and 49K99}

\section{Introduction}
Over the years, optimal control theory has established itself as an important part of applied mathematics, offering methods to manage and analyze dynamical systems. Its impact is particularly profound when applied to systems governed by partial differential equations (PDEs), where it provides powerful tools for tackling real-world challenges in areas such as fluid mechanics, engineering design, financial modeling, and image reconstruction~\cite{kirk2004optimal, troltzsch2010optimal, athans2007optimal}. Foundational contributions by J.L. Lions~\cite{lions1971optimal, lions1994some} laid the groundwork for the theoretical development of optimal control in PDEs.

% More recently, researchers have focused on extending optimal control techniques to fractional PDEs, motivated by their applicability to phenomena exhibiting anomalous diffusion or memory effects. These models are particularly relevant in areas such as physics, biology, and engineering~\cite{antil2016space, warma2022existence}. Fractional PDEs involve derivatives of non-integer order either in time or space, which leads to nonlocal behavior and introduces substantial analytical and numerical challenges.
In recent years, growing attention has been directed toward extending optimal control methods to systems governed by fractional partial differential equations (PDEs). The motivation arises from their remarkable ability to capture complex dynamics such as anomalous diffusion and memory effects, which cannot be described adequately by classical models. Such fractional models have proven highly relevant across diverse fields, including physics, biology, and engineering~\cite{antil2016space, warma2022existence}. Unlike standard PDEs, fractional PDEs incorporate derivatives of non-integer order in time or space, introducing nonlocality into the system. This nonlocal behavior enriches the modeling capacity but simultaneously presents significant analytical and computational challenges.

% In spatial domains, the fractional Laplacian is a key operator used to describe nonlocal diffusion processes. It is especially useful when modeling L{\'e}vy flight-type random processes, which more accurately capture the behavior of many physical systems compared to classical Brownian motion~\cite{del2003front, metzler2000random, ros2014dirichlet}. Several studies have explored the mathematical and computational aspects of the fractional Laplacian, revealing rich functional-analytic structures and regularity properties~\cite{di2012hitchhikers, cozzi2017interior, biccari2017local}.
In spatial domains, the fractional Laplacian plays a central role in describing nonlocal diffusion phenomena. Unlike the classical Laplacian, it effectively models jump-driven dynamics such as L{\'e}vy flight-type random processes, which provide a more accurate representation of many real-world processes than Brownian motion~\cite{del2003front, metzler2000random, ros2014dirichlet}. Over the past decade, several studies have explored the mathematical and computational aspects of the fractional Laplacian, revealing rich functional-analytic structures and regularity properties~\cite{di2012hitchhikers, cozzi2017interior, biccari2017local}.

% On the temporal side, the Caputo fractional derivative is widely employed to model memory effects in viscoelastic materials and porous media~\cite{allen2016parabolic, samko1993fractional, diethelm2002analysis}. The use of the Caputo derivative has been rigorously justified in several studies~\cite{chen2017time, hairer2018fractional}, highlighting its appropriateness for real-world modeling.
On the temporal side, the Caputo fractional derivative, a powerful tool for modeling processes in viscoelastic materials and porous media~\cite{allen2016parabolic, samko1993fractional, diethelm2002analysis}. The mathematical soundness of the Caputo derivative has been firmly established in numerous studies~\cite{chen2017time, hairer2018fractional}. The Caputo derivative has been mathematically justified in various studies~\cite{chen2017time, hairer2018fractional}, confirming its suitability for fractional-order modeling.

% Within the realm of fractional PDEs, control problems have received increasing attention. A notable example is the work of Micu and Zuazua~\cite{MicuZuazua2006}, which addressed the null controllability of fractional parabolic equations, offering foundational insights into the nature of controllability in such nonlocal systems.

When multiple decision-makers (or control functions) aim to optimize their respective objectives within a shared system, the mathematically  extends into the  game theory. In this setting, the concept of a \emph{Nash equilibrium} plays a pivotal role, representing a state in which no player can improve their outcome by unilaterally changing their strategy. Such configurations are particularly relevant in distributed control, economics, and multi-agent systems~\cite{ramos2002nash, borzi2013formulation, ramos2003numerical}.

In differential game theory, each player is associated with an objective functional that reflects their control goal. The equilibrium configuration reflects the optimal strategy for each player under the constraint that the others do not deviate. This multi-objective control perspective allows for a richer formulation of real-world problems where competing interests must be balanced simultaneously.

\bigskip

\noindent
\textbf{Novelty of This Work.} 
% While Nash equilibrium concepts for bi-objective optimal control of classical parabolic PDEs have been thoroughly studied in the literature~\cite{borzi2013formulation, ramos2002nash, ramos2003numerical, ramos2023nash}, all these works pertain to the classical integer-order PDE setting. The idea of Nash equilibrium is extended in this paper to bi-objective optimal control problems that are governed by fractional-in-time and fractional-in-space PDEs. In particular, we examine PDEs that involve the fractional Laplacian in space and the Caputo derivative in time.
The concept of Nash equilibrium in bi-objective optimal control has been extensively studied in the context of classical elliptic and parabolic PDEs~\cite{borzi2013formulation, ramos2002nash, ramos2003numerical, ramos2023nash}. However, these studies are limited to the traditional integer-order setting. In this work, we extend the concept of Nash equilibrium to bi-objective optimal control problems governed by fractional PDEs, where both time and
space derivatives are of non-integer order. Specifically, our focus is on systems involving the Caputo fractional derivative in time, which captures memory effects, and the fractional Laplacian in space, which accounts for nonlocal diffusion.

% We begin by showing that the Nash equilibrium for these fractional PDE control problems exists and is unique. We then suggest a numerical method to calculate these Nash solutions and use numerical experiments to demonstrate the results.  This setting introduces new mathematical challenges due to the nonlocality in time and space, extending the classical theory of differential games into the fractional framework. Here, we address the fractional Laplacian in space and the nonlocality of Caputo time.

% By adding two different cost functionals, $J_1$ and $J_2$, connected to independent control agents, bi-objective control builds on conventional control theory.  The equilibrium we seek is characterized by the simultaneous minimization of these functionals under PDE constraints.

We first establish the existence and uniqueness of Nash equilibrium for the proposed fractional PDE control problems. To approximate these equilibrium, we design a numerical scheme and validate its effectiveness through computational experiments. The fractional setting introduces significant mathematical challenges, primarily due to the nonlocal nature of both space and time operators. In particular, our analysis addresses the interplay between the spatial fractional Laplacian and the temporal nonlocality induced by the Caputo derivative, thereby extending the classical theory of differential games into the fractional domain. Bi-objective control extends classical control theory by introducing two distinct cost functionals, $J_1$ and $J_2$, each associated with an independent control agent. The Nash equilibrium in this setting corresponds to a pair of strategies that simultaneously minimize these objectives under the governing PDE constraints.

\paragraph{Structure of the Paper.}
The paper is organized as follows. In Section~\ref{sec:2}, we present the analytical and terminological prerequisites. We begin with a general overview of the fractional Laplacian and then discuss key concepts related to fractional derivatives and integral, which play a crucial role in variational analysis. In Section~\ref{sec:3}, we introduce the bi-objective optimal control problem governed by a time-fractional parabolic partial differential equation, incorporating the Caputo derivative in time and the fractional Laplacian in space. Section~\ref{sec:4} is devoted to the theoretical analysis of the problem, where we establish and characterize the Nash equilibrium and derive the associated optimality system, concluding with the existence and uniqueness of the Nash equilibrium for the main problem. In Section~\ref{sec:5}, we illustrate the practical performance of our theoretical results through several numerical examples, all solved using a conjugate gradient algorithm. Example~\ref{ex:1}, implemented in FreeFEM++, investigates a time-fractional parabolic problem, while Example~\ref{ex:2}, carried out in MATLAB, addresses an elliptic optimal control problem involving the fractional Laplacian. Furthermore, we analyze the influence of different fractional orders on the solutions. Finally, Examples~\ref{ex:3} and \ref{ex4} examine one-dimensional time-fractional parabolic Nash equilibrium problems. These simulations are implemented in Python, employing the L1 scheme \cite{jin2016analysis} for the Caputo time derivative and the fractional Laplacian discretization proposed in \cite{duo2018novel}.

\section{Preliminaries: Analytical Setting}\label{sec:2}
Let's start by providing background information on the fractional Laplacian and the related functional structure that is important to nonlocal PDEs and their optimal control in order to prepare the reader for our next analysis.

\subsection*{Fractional Laplacian and Functional Setting:}
To proceed, we recall fundamental concepts related to the fractional Laplacian, which plays a crucial role in the analysis and optimal control of nonlocal partial differential equations.

Let \( s \in (0,1) \). The fractional Laplacian \( (-\Delta)^s \) of a function \( w : \mathbb{R}^N \to \mathbb{R} \) is defined by the singular integral
\begin{equation}
(-\Delta)^s w(x) := C_{N,s} \, \text{P.V.} \int_{\mathbb{R}^N} \frac{w(x) - w(y)}{|x - y|^{N + 2s}} \, dy, \label{eq:fractional_Laplacian}
\end{equation}
where \text{P.V.} denotes the Cauchy principal value, and the constant \( C_{N,s} \) is given by
\[
C_{N,s} := \frac{s \cdot 2^{2s} \Gamma\left( \frac{N + 2s}{2} \right)}{\pi^{N/2} \Gamma(1 - s)}.
\]

For functions \( w \in \mathcal{C}^{1,1}_{\mathrm{loc}}(\mathbb{R}^N) \cap \mathcal{L}^1_s(\mathbb{R}^N) \), the singular integral defining the fractional Laplacian is well-defined in the principal value sense:
\[
(-\Delta)^s w(x) := C_{N,s} \lim_{\epsilon \to 0} \int_{\mathbb{R}^N \setminus B_\epsilon(x)} \frac{w(x) - w(y)}{|x - y|^{N + 2s}} \, dy,
\]
where the weight space \( \mathcal{L}_s^1(\mathbb{R}^N) \) consists of locally integrable functions with sufficient decay at infinity:
\[
\mathcal{L}_s^1(\mathbb{R}^N) := \left\{ w \in L^1_{\mathrm{loc}}(\mathbb{R}^N) : \int_{\mathbb{R}^N} \frac{|w(x)|}{1 + |x|^{N + 2s}} \, dx < \infty \right\}.
\]
This setting ensures both the regularity near the singularity \( x = y \) and the integrability at infinity, thereby making the principal value integral convergent. Note that when \( s \in \left(0, \tfrac{1}{2}\right) \), the integrand becomes integrable near \( x \), and the singularity in the principal value vanishes.

The operator \( (-\Delta)^s \) is inherently nonlocal. Unlike the classical Laplacian, its evaluation at a point \( x \) depends on the values of the function on the entire domain. For instance, if \( w > 0 \) in a bounded open set \( \Omega \subset \mathbb{R}^N \) and \( w \equiv 0 \) in \( \mathbb{R}^N \setminus \Omega \), then for any \( x \in \mathbb{R}^N \setminus \Omega \),
\[
(-\Delta)^s w(x) = C_{N,s} \int_{\Omega} \frac{-w(y)}{|x - y|^{N + 2s}} \, dy < 0,
\]
highlighting the global influence of the function values over the entire space.

\medskip

The fractional Sobolev space is defined by
\[
H^s(\mathbb{R}^N) := \left\{ w \in L^2(\mathbb{R}^N) : [w]_{H^s(\mathbb{R}^N)} < \infty \right\},
\]
with the Gagliardo seminorm
\[
[w]_{H^s(\mathbb{R}^N)}^2 := \int_{\mathbb{R}^N} \int_{\mathbb{R}^N} \frac{|w(x) - w(y)|^2}{|x - y|^{N + 2s}} \, dx \, dy.
\]
The space \( H_0^s(\Omega) \subset H^s(\mathbb{R}^N) \) consists of functions vanishing almost everywhere outside \( \Omega \), i.e.,
\[
H_0^s(\Omega) := \left\{ w \in H^s(\mathbb{R}^N) : w = 0 \text{ a.e. in } \mathbb{R}^N \setminus \Omega \right\},
\]
and its dual is denoted by \( H^{-s}(\Omega) \) (see \cite{di2012hitchhikers}). 
% For further details, see \cite{di2012hitchhikers}.

\subsection*{Fractional Derivatives, Integrals, and Integration by Parts:}
Let \( \gamma \in (0,1) \). The left-sided Caputo derivative of order \( \gamma \) for a function \( w : \Omega \times (0,T) \to \mathbb{R} \) is defined as
\[
\partial_t^\gamma w(x,t) := \frac{1}{\Gamma(1 - \gamma)} \int_0^t \frac{1}{(t - \xi)^\gamma} \frac{\partial w(x,\xi)}{\partial \xi} \, d\xi.
\]
The corresponding right-sided Caputo derivative is
\[
\partial_{T - t}^\gamma w(x,t) := \frac{-1}{\Gamma(1 - \gamma)} \int_t^T \frac{1}{(\xi - t)^\gamma} \frac{\partial w(x,\xi)}{\partial \xi} \, d\xi.
\]
We also define the Riemann–Liouville fractional integrals of order \( \gamma > 0 \) as:
\[
(I_t^\gamma w)(t) := \frac{1}{\Gamma(\gamma)} \int_0^t \frac{w(\xi)}{(t - \xi)^{1 - \gamma}} \, d\xi, \qquad
(I_{T - t}^\gamma w)(t) := \frac{1}{\Gamma(\gamma)} \int_t^T \frac{w(\xi)}{(\xi - t)^{1 - \gamma}} \, d\xi.
\]

Let \( \chi \) be a Banach space (e.g., \( L^2(\Omega), H^s(\Omega), H^{-s}(\Omega) \)). Then the Bochner space \( L^p(0,T;\chi) \) for \( 1 \leq p \leq \infty \) is defined by
\[
\|w\|_{L^p(0,T;\chi)} = \left( \int_0^T \|w(t)\|_\chi^p dt \right)^{1/p}, \quad
\|w\|_{L^\infty(0,T;\chi)} = \operatorname{ess\,sup}_{t \in (0,T)} \|w(t)\|_\chi.
\]
Next, we now introduce the solution space used in the variational formulation, defined as in \cite{antil2016space}:
\[
\mathbb{V} := \left\{ w \in L^\infty(0,T;L^2(\Omega)) \cap L^2(0,T;H^s_0(\Omega)) : \partial_t^\gamma w \in L^2(0,T;H^{-s}(\Omega)) \right\}.
\]
This construction allows us to invoke the following fractional integration by parts identity, which is essential in deriving the adjoint equations.

\begin{lemma}[Fractional Integration by Parts, {\cite[Lemma 7]{antil2016space}}] \label{lemma:frac_ibp}
Let \( v, w \in \mathbb{V} \cap C([0,T]; L^2(\Omega)) \). If Let $\gamma \in (0,1)$. Then the following identity holds:
\[
\int_0^T \langle \partial_t^\gamma v(t), w(t) \rangle - \langle \partial_{T-t}^\gamma w(t), v(t) \rangle \, dt
= (w(T), (I_t^{1-\gamma} v)(T)) - (v(0), (I_{T-t}^{1-\gamma} w)(0)),
\]
where \( (\cdot, \cdot) \) denotes the \( L^2(\Omega) \) inner product, and \( \langle \cdot, \cdot \rangle \) is the duality pairing between \( H^{-s}(\Omega) \) and \( H^s(\Omega) \).
\end{lemma}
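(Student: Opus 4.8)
The plan is to reduce the claimed identity to the elementary adjointness between the left- and right-sided Riemann--Liouville fractional integrals, and then to transfer the time derivative by a classical integration by parts, carefully collecting the boundary contributions. The backbone of the argument is the observation that each Caputo derivative is a fractional integral of an ordinary derivative, namely $\partial_t^\gamma v = I_t^{1-\gamma}(\partial_t v)$ and $\partial_{T-t}^\gamma w = -\,I_{T-t}^{1-\gamma}(\partial_t w)$, which converts the problem into one about the operators $I_t^{1-\gamma}$ and $I_{T-t}^{1-\gamma}$ acting on genuine (weak) time derivatives.

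First I would establish, for $\alpha \in (0,1)$ and suitable Hilbert-space valued $f,g$, the duality relation
\begin{equation*}
\int_0^T \langle (I_t^{\alpha} f)(t),\, g(t)\rangle\, dt = \int_0^T \langle f(t),\, (I_{T-t}^{\alpha} g)(t)\rangle\, dt,
\end{equation*}
which follows directly from Fubini's theorem applied to the kernel $(t-\xi)^{\alpha-1}$ on the simplex $\{0\le \xi \le t \le T\}$, the integrability of the kernel being guaranteed by $\alpha \in (0,1)$. Applying this with $\alpha = 1-\gamma$, $f = \partial_t v$, $g = w$ turns $\int_0^T\langle \partial_t^\gamma v, w\rangle\, dt$ into $\int_0^T \langle \partial_t v,\, (I_{T-t}^{1-\gamma}w)\rangle\, dt$.

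Next, writing $g(t) := (I_{T-t}^{1-\gamma}w)(t)$ and integrating by parts in time produces the boundary terms $\langle v(T), g(T)\rangle - \langle v(0), g(0)\rangle$ together with $-\int_0^T\langle v, g'\rangle\, dt$. Here two facts do the work: the right Riemann--Liouville integral satisfies $g(T) = (I_{T-t}^{1-\gamma}w)(T) = 0$, so the terminal boundary term drops out; and a direct Leibniz differentiation (after the substitution $\eta = \xi - t$) yields $g'(t) = -\,\partial_{T-t}^\gamma w(t) - \tfrac{(T-t)^{-\gamma}}{\Gamma(1-\gamma)}\,w(T)$. Substituting this expression, the term involving $\partial_{T-t}^\gamma w$ reproduces exactly $\int_0^T \langle \partial_{T-t}^\gamma w, v\rangle\, dt$, while the remaining singular integral $\tfrac{1}{\Gamma(1-\gamma)}\int_0^T (T-t)^{-\gamma}\langle v(t), w(T)\rangle\, dt$ is recognised as $(w(T), (I_t^{1-\gamma}v)(T))$, since $(I_t^{1-\gamma}v)(T) = \tfrac{1}{\Gamma(1-\gamma)}\int_0^T (T-\xi)^{-\gamma} v(\xi)\, d\xi$. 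Collecting everything and moving $\int_0^T\langle \partial_{T-t}^\gamma w, v\rangle\, dt$ to the left produces precisely the stated identity.

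The \emph{main obstacle} is that functions in $\mathbb{V}\cap C([0,T];L^2(\Omega))$ possess only weak time derivatives, so neither the differentiation-under-the-integral step nor the classical integration by parts is justified pointwise at this regularity. I would therefore first prove the identity for functions that are smooth in time (say $C^1$ in $t$ with values in the relevant spaces), where every manipulation above is legitimate, and then extend to the full space by density: one checks that each of the four terms is continuous with respect to the norms of $\mathbb{V}$ and $C([0,T];L^2(\Omega))$, using the mapping properties of $I_t^{1-\gamma}$ and $I_{T-t}^{1-\gamma}$ on the Bochner spaces together with the continuity of the trace maps $v\mapsto v(0)$ and $w \mapsto w(T)$, so that the identity passes to the limit. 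Verifying these continuity and boundedness estimates — in particular the stability of the fractional integrals in $L^2(0,T;H^{\pm s}(\Omega))$ and the well-posedness of the boundary evaluations $(I_t^{1-\gamma}v)(T)$ and $(I_{T-t}^{1-\gamma}w)(0)$ in $L^2(\Omega)$ — is the technical heart of the argument.
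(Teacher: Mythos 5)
Your argument is correct in substance, but note that the paper itself does not prove this lemma at all: it is imported verbatim, with citation, from \cite[Lemma 7]{antil2016space}, so there is no in-paper proof to match against; what you have reconstructed is essentially the standard argument behind that cited result. Your core computation checks out: the Fubini-based adjointness
\[
\int_0^T \langle (I_t^{\alpha}f)(t), g(t)\rangle\, dt = \int_0^T \langle f(t), (I_{T-t}^{\alpha}g)(t)\rangle\, dt
\]
is valid for $\alpha\in(0,1)$; with $g(t):=(I_{T-t}^{1-\gamma}w)(t)$ one indeed has $g(T)=0$, and for $w$ smooth in time the differentiation formula
\[
g'(t) = -\,\partial_{T-t}^{\gamma}w(t) - \frac{(T-t)^{-\gamma}}{\Gamma(1-\gamma)}\,w(T)
\]
is consistent with the paper's sign convention for the right Caputo derivative; the singular remainder then recombines into $(w(T),(I_t^{1-\gamma}v)(T))$ exactly as you claim. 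So for $v,w$ that are $C^1$ in time, your calculation fully establishes the identity, and the overall route (adjointness of the Riemann--Liouville integrals plus classical integration by parts plus density) is the same one used in the literature the paper leans on.

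The one place where your proposal is genuinely thinner than the cited source is the extension step, which you correctly flag as the technical heart but do not carry out. Two things need proof there: (i) that time-smooth functions are dense in $\mathbb{V}\cap C([0,T];L^2(\Omega))$ for a norm strong enough to control all four terms simultaneously (the $\mathbb{V}$-seminorm $\|\partial_t^\gamma \cdot\|_{L^2(0,T;H^{-s}(\Omega))}$ together with the $C([0,T];L^2(\Omega))$ norm, so that the traces $v(0)$, $w(T)$ and the evaluations $(I_t^{1-\gamma}v)(T)$, $(I_{T-t}^{1-\gamma}w)(0)$ all pass to the limit); and (ii) that for a general element of $\mathbb{V}$ --- whose pointwise time derivative need not exist in any integrable sense, so that $\partial_t^\gamma v = I_t^{1-\gamma}(\partial_t v)$ is not literally available --- the generalized Caputo derivative appearing in the definition of $\mathbb{V}$ coincides with the limit of the Caputo derivatives of the approximating smooth functions. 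Without (ii), the identity you prove for smooth functions is an identity about a possibly different operator than the one in the statement. These consistency and density facts are precisely what the reference establishes; as a blind reconstruction your proposal is sound and takes the right route, but it is a sketch rather than a complete proof at that step.
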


\section{Problem Formulation: Bi-objective Optimal Control of a Space-Time Fractional Parabolic PDE}\label{sec:3}
This section presents the formulation of a bi-objective optimal control problem constrained by a space-time fractional parabolic partial differential equation (PDE), with its Nash equilibrium. This formulation extends the classical PDE-constrained bi-objective optimal control problem by incorporating Caputo-type time-fractional and fractional Laplacian operators.

Our aim is to find a control pair \( (u_1, u_2) \in U_1 \times U_2 \subset L^2(\Omega \times (0, T))^2 \) such that the following two-player Nash optimality system holds:
\begin{equation}
\begin{aligned}
&\textbf{Player 1:} \\
&\quad \min_{u_1 \in U_1} J_1(w, u_1, u_2), \quad \text{subject to:} \\
&\quad \partial_t^\gamma w + (-\Delta)^s w = B_1 u_1 + B_2 u_2 + f \quad \text{in } \Omega \times (0,T), \\
&\quad w = 0 \quad \text{on } \Omega^c\times (0,T), \\
&\quad w(0) = w_0 \quad \text{in } \Omega, \\
\\
&\textbf{Player 2:} \\
&\quad \min_{u_2 \in U_2} J_2(w, u_1, u_2), \quad \text{subject to:} \\
&\quad \partial_t^\gamma w + (-\Delta)^s w = B_1 u_1 + B_2 u_2 + f \quad \text{in } \Omega \times (0,T), \\
&\quad w = 0 \quad \text{on } \Omega^c\times (0,T), \\
&\quad w(0) = w_0 \quad \text{in } \Omega.
\end{aligned}
\label{eq:bi-objective_control_equation}
\end{equation}
Here, \( \Omega \subset \mathbb{R}^N \) is a bounded Lipschitz domain, \( w_0 \in L^2(\Omega) \) is the initial state, and \( f \in L^2(\Omega \times (0,T)) \) is a given source term. \( \partial_t^\gamma \) is the fractional time derivative of order \( \gamma \in (0,1) \), \( (-\Delta)^s \) is the fractional Laplacian of order \( s > 0 \), \( B_1, B_2 \) are linear operators, and \( u_1 \in U_1 \), \( u_2 \in U_2 \) are controls, with \( U_1, U_2 \) being admissible control sets, \( Q = \Omega \times (0,T) \), with \( \Omega \subset \mathbb{R}^n \) and \( T > 0 \), \( \mathbb{V} \) is a suitable function space for \( w \) and \( p_1 \).
\medskip

The cost functional for each player \( j \in \{1, 2\} \) is defined by
\[
J_j(w, u_1, u_2) := \frac{1}{2} \int_0^T \left( \|w - w_j\|^2_{L^2(\Omega)} + \mu_j \|B_j u_j\|^2_{L^2(\Omega)} \right) dt,
\]
where \( w_j \in L^2(\Omega \times (0,T)) \) is a desired state and \( \mu_j > 0 \) is a regularization parameter. The admissible control sets are defined by
\[
U_j := \left\{ u \in L^2(\omega_j \times (0,T)) : l_j(x,t) \leq u(x,t) \leq r_j(x,t) \text{ a.e. in } \omega_j \times (0,T) \right\},
\]
where \( \omega_j \subset \Omega \) is an open subdomain, and \( l_j, r_j \in L^\infty(\omega_j \times (0,T)) \) with \( l_j(x,t) \leq r_j(x,t) \) a.e. for \( j = 1, 2 \).
\medskip

Further, we introduce the \emph{control extension operator} 
\[
B_j : L^2(\omega_j \times (0,T)) \;\longrightarrow\; L^2(\Omega \times (0,T)), \quad j=1,2,
\]
which is defined by
\begin{equation}\label{eq:control-extension}
B_j u_j(x,t) :=
\begin{cases}
u_j(x,t), & (x,t) \in \omega_j \times (0,T), \\[6pt]
0,        & (x,t) \in (\Omega \setminus \omega_j) \times (0,T).
\end{cases}
\end{equation}
In this way, each control function \(u_j\) is naturally extended from its local support 
\(\omega_j \subset \Omega\) to the entire space--time domain \(\Omega \times (0,T)\). 
Consequently, the influence of the control remains localized to the prescribed region, 
while the formulation of the PDE dynamics is preserved on the whole domain.

\medskip

The above formulation defines a non-cooperative bi-objective optimization problem in which each control \( u_j \) aims to steer the common state \( w \) toward a distinct target \( w_j \), under the influence of their counterpart's control. The concept of Nash equilibrium is applied to determine a pair of admissible controls \( (u_1, u_2) \) such that neither player can unilaterally reduce their own cost.

\noindent \textbf{Notation:} From now onwards, define the reduced cost functionals 
$$J_j(w(u_1,u_2),u_1,u_2): = J_j(u_1,u_2).$$

% In this section, we show the Nash equilibrium to the bi-objective control problem \eqref{eq:bi-objective_control_equation}.  exists and is unique. 

% We have $B_i, i=1,2$ be the linear extension operator as   $B_i u_i\in L^2(\Omega \times (0,T))$.

\begin{theorem}[Existence and uniqueness] The optimal control problem 
\begin{equation}\label{maincost}
    J_j(u_1,u_2) := \frac{1}{2} \int_{0}^{T} \Big( \|w-w_j\|^2_{L^2(\Omega)} + \mu_j \|B_j u_j\|^2_{L^2(\Omega)} \Big) dt
\end{equation}

subjected to the PDE
\begin{equation}
\begin{cases} 
\partial ^ \gamma _t w + (-\Delta)^sw=B_1u_1+B_2u_2+ f  \text{ in } \Omega \times (0,T),  \\
w=0 \text{ on } \Omega^c \times (0,T), \\
w(0)=w_0  \text{ in } \Omega, \\
u_j \in U_j,
\end{cases}
\label{eq:mainstate}
\end{equation}
has a unique optimal solution $\bar u_j(u_i) \in U_i$ (i.e., $J_j(\bar u_j(u_i),u_i) \leq J_j(u_j,u_i) , ~ \forall u_j \in U_i$) for each $u_j \in U_j(j\neq i) $ and $f,w_i \in L^2(\Omega \times (0,T) ), \text{ for } i,j \in 1,2$.
\end{theorem}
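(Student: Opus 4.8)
The plan is to fix the opponent's control $u_i$ and treat $u_j \mapsto J_j(u_j,u_i)$ as a scalar minimization problem over the admissible set $U_j$, then invoke the direct method of the calculus of variations. The argument rests on three pillars: well-posedness of the state equation, the affine and continuous dependence of the state on the control, and the strict convexity and coercivity structure of the reduced cost functional.

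First I would establish that, for any pair $(u_1,u_2) \in L^2(Q)^2$ together with the data $f \in L^2(Q)$ and $w_0 \in L^2(\Omega)$, the state equation \eqref{eq:mainstate} admits a unique weak solution $w = w(u_1,u_2) \in \mathbb{V}$. Since $B_1 u_1 + B_2 u_2 + f \in L^2(Q)$, this is precisely the well-posedness of the linear space--time fractional parabolic problem established in \cite{antil2016space}; in particular one obtains an a priori energy estimate of the form $\|w\|_{\mathbb{V}} \le C\big(\|B_1 u_1 + B_2 u_2 + f\|_{L^2(Q)} + \|w_0\|_{L^2(\Omega)}\big)$. This guarantees that the reduced functional $J_j(u_1,u_2)$ is well defined for every admissible pair.

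Next I would exploit the linearity of \eqref{eq:mainstate}. Writing $w(u_1,u_2) = z(u_1,u_2) + \hat w$, where $\hat w$ solves the equation with the controls set to zero (i.e.\ driven by $f,w_0$) and $z$ solves the homogeneous-data equation driven by $B_1 u_1 + B_2 u_2$, the control-to-state map $u_j \mapsto w(u_j,u_i)$ with $u_i$ fixed is affine, and the energy estimate above renders its linear part a bounded operator from $L^2(\omega_j\times(0,T))$ into $\mathbb{V}$, hence into $L^2(0,T;L^2(\Omega))$. Composing this affine continuous map with the quadratic tracking term shows that $u_j \mapsto \tfrac12\int_0^T\|w-w_j\|_{L^2(\Omega)}^2\,dt$ is convex and continuous, while the regularization term $\tfrac{\mu_j}{2}\int_0^T\|B_j u_j\|_{L^2(\Omega)}^2\,dt$ is strictly convex in $u_j$, since the extension operator $B_j$ is an isometry onto its range and $\mu_j>0$. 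Adding the two, $J_j(\cdot,u_i)$ is strictly convex, continuous, and coercive on $L^2(\omega_j\times(0,T))$.

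Finally I would verify that $U_j$ is nonempty, convex, closed, and bounded in $L^2(\omega_j\times(0,T))$: the pointwise box constraints $l_j \le u \le r_j$ with $l_j,r_j\in L^\infty$ define a convex set, closedness follows by passing to the limit in the a.e.\ inequalities along an $L^2$-convergent subsequence, and boundedness follows from $|u|\le \|l_j\|_{L^\infty}+\|r_j\|_{L^\infty}$ a.e. As $L^2$ is a Hilbert space, $U_j$ is therefore weakly compact, and the convex continuous functional $J_j(\cdot,u_i)$ is weakly lower semicontinuous; the direct method yields a minimizer $\bar u_j(u_i)\in U_j$, while strict convexity forces uniqueness. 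I expect the main obstacle to lie not in the convexity bookkeeping but in securing the continuous-dependence estimate for the control-to-state map, since the simultaneous presence of the Caputo derivative and the fractional Laplacian makes the energy estimate underpinning Step~1 the analytic heart of the argument; once \cite{antil2016space} is invoked this reduces to a citation, but it remains the delicate ingredient on which the remaining, more routine, steps depend.
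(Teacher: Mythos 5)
Your proposal is correct and follows essentially the same route as the paper: the paper's proof likewise fixes $u_i$, notes that the control-to-state operator is affine and continuous, that the reduced functional $J_j(\cdot,u_i)$ is strictly convex, weakly lower semicontinuous, and coercive, and concludes by the direct method of the calculus of variations, deferring the analytic details (well-posedness of the fractional state equation and the energy estimate) to Theorem~10 of Antil--Ot\'arola--Salgado. Your write-up simply fills in the steps the paper leaves as a citation, including the verification that $U_j$ is convex, closed, and bounded, so there is no substantive difference in approach.
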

\begin{proof}
Fix $u_i \in U_i$ with $i \neq j$. Then the control-to-state operator $S_j$ is affine, continuous, 
and maps $L^2(\Omega \times (0,T))$ into itself. Moreover, the reduced cost functional $J_j$ $(j=1,2)$ 
is strictly convex, weakly lower semi-continuous, and coercive. Therefore, by the direct method of 
the calculus of variations, $J_j$ admits a unique minimizer in the admissible set $U_j$; 
see Section~2, Theorem~10 of~\cite{antil2016space} for a detailed proof.
\end{proof}

\section{Existence and uniqueness of a Nash equilibrium.} \label{sec:4}
In this section, we show that the existence and uniqueness of  Nash equilibrium to the problem \eqref{eq:bi-objective_control_equation}.

The above problem is formulated within the settings of a noncooperative Nash game, in which each player \( j \in \{1,2\} \) seeks to minimize their own cost functional \( J_j( u_1, u_2) \), subject to the common state dynamics governed by a space-time fractional parabolic PDE. Crucially, the control variable \( u_j \) for player \( j \) influences the global state \( w \), which in turn affects the cost functional of the other player.

In this setting, neither player can improve their outcome by unilaterally deviating from their strategy. The equilibrium thus represents a stable operating point of the coupled control system, consistent with the classical game-theoretic notion of Nash equilibrium applied to distributed parameter systems.

\begin{definition}
A feasible element $\bar u=(\bar u_1,\bar u_2)\in U_{ad}$ is called a 
\emph{Nash solution} or \emph{Nash equilibrium} of 
\eqref{eq:bi-objective_control_equation} if 
\begin{equation}
\begin{cases}
J_1(\bar u_1,\bar u_2) \leq J_1(u_1,\bar u_2) \quad \forall u_1 \in U_1, \\
J_2(\bar u_1,\bar u_2) \leq J_2(\bar u_1, u_2) \quad \forall u_2 \in  U_2.
\end{cases}   \label{eq:nash_solution_definition}
\end{equation}
\end{definition}
Thus, a Nash strategy consists of two controls acting competitively: 
each player seeks to minimize their own functional while the opponent’s strategy is fixed.

Consider the map 

\[
\begin{aligned}
J_1 : &~ L^2\!\big((0,T)\times \Omega\big) \;\longrightarrow\; \mathbb{R}, 
&\quad \text{defined by } u_1 &\;\longmapsto\; J_1(u_1, \bar u_2), \\[6pt] 
J_2 : &~ L^2\!\big((0,T)\times \Omega\big) \;\longrightarrow\; \mathbb{R}, 
&\quad \text{defined by } u_2 &\;\longmapsto\; J_2(\bar u_1, u_2).
\end{aligned}
\]
both $J_{1}$ and $J_{2}$ are convex function for fixed $\bar u_2$ and $\bar u_1$, respectively. 

It is straightforward to verify that, for fixed $\bar u_2$ (resp.\ $\bar u_1$), the reduced mappings are convex. This allows us to characterize the minimizers via variational inequalities. Let's  begin with a classical result.

\begin{lemma}[Variational Inequality]\label{variq}
Let \( J_j \) be the cost functional defined in \eqref{maincost}.  
Then, for fixed \( \bar u_i \) with \( i \neq j \), a control  
\( \bar{u}_j \in U_j \) minimizes \( J_j \) if and only if  
\[
   \left( \frac{D J_j(\bar{u}_1, \bar{u}_2)}{D u_j},\,
   u_j - \bar{u}_j \right)_{L^2(\Omega \times (0,T))} \;\geq\; 0,
   \quad \forall u_j \in U_j .
\]

\end{lemma}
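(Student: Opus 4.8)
The plan is to prove the standard first-order optimality characterization for the convex minimization of $J_j$ over the convex admissible set $U_j$, with the opponent's control $\bar u_i$ held fixed. The key structural facts I would rely on are that $U_j$ is a closed convex subset of $L^2(\Omega\times(0,T))$ (it is defined by pointwise box constraints $l_j\le u\le r_j$) and that the reduced functional $u_j\mapsto J_j(\bar u_1,\bar u_2)$ is convex and Gateaux differentiable, both of which have already been asserted in the excerpt. The proof is an instance of the classical variational inequality for constrained convex optimization, so the argument splits naturally into a necessity direction and a sufficiency direction.

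For the necessity direction, I would assume $\bar u_j$ minimizes $J_j$ over $U_j$ and fix an arbitrary $u_j\in U_j$. Since $U_j$ is convex, the segment $\bar u_j + \theta(u_j-\bar u_j)$ lies in $U_j$ for all $\theta\in[0,1]$, so by optimality the scalar function $\phi(\theta):=J_j(\bar u_j+\theta(u_j-\bar u_j),\bar u_i)$ attains its minimum on $[0,1]$ at $\theta=0$. Consequently the one-sided derivative satisfies $\phi'(0^+)\ge 0$. Computing $\phi'(0^+)$ by the chain rule yields exactly the directional derivative of $J_j$ at $\bar u_j$ in the direction $u_j-\bar u_j$, which equals the $L^2$ inner product $\big(\tfrac{DJ_j(\bar u_1,\bar u_2)}{Du_j},\,u_j-\bar u_j\big)_{L^2(\Omega\times(0,T))}$. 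Since $u_j\in U_j$ was arbitrary, the variational inequality follows.

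For the sufficiency direction, I would use convexity. If the variational inequality holds at $\bar u_j$, then for any $u_j\in U_j$ the convexity of $J_j$ gives the subgradient inequality
\[
J_j(u_j,\bar u_i) \;\geq\; J_j(\bar u_j,\bar u_i) + \left(\frac{DJ_j(\bar u_1,\bar u_2)}{Du_j},\,u_j-\bar u_j\right)_{L^2(\Omega\times(0,T))}.
\]
The inner-product term on the right is nonnegative by hypothesis, so $J_j(u_j,\bar u_i)\ge J_j(\bar u_j,\bar u_i)$ for all $u_j\in U_j$, establishing that $\bar u_j$ is a global minimizer.

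I do not anticipate a deep obstacle here, since this is a textbook characterization once convexity and differentiability are in place. The only point requiring genuine care is the rigorous justification that the Gateaux derivative of the reduced functional exists and that the directional derivative $\phi'(0^+)$ may be identified with the claimed $L^2$ gradient $\tfrac{DJ_j}{Du_j}$; this rests on the affine, continuous dependence of the state $w$ on the control through the solution operator $S_j$ (already established in the preceding theorem) together with the quadratic structure of $J_j$, so that differentiation under the integral sign is legitimate. Making the convexity of the reduced map explicit (inherited from the quadratic cost composed with the affine control-to-state map) is the one ingredient I would want to state cleanly before invoking the subgradient inequality.
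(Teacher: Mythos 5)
Your proposal is correct and is precisely the standard argument that the paper invokes by citation rather than writing out (the paper's ``proof'' simply refers to Lemma~2.21 of Tr\"oltzsch and Lemma~14 of Antil et al., both of which proceed exactly as you do: necessity via the one-sided derivative $\phi'(0^+)\ge 0$ along the convex segment, sufficiency via the convexity/subgradient inequality). Your closing remark about justifying Gateaux differentiability of the reduced functional through the affine control-to-state map and the quadratic cost is exactly the ingredient those references supply, so nothing is missing.
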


\begin{proof}
The concept is standard; (see \cite[Lemma~2.21]{troltzsch2010optimal} and \cite[Lemma~14]{antil2016space}).
\end{proof}

\noindent The condition stated in \eqref{eq:nash_solution_definition} can be characterized by the following result.
\begin{lemma}\label{l3}
Let \( J_j \) be the cost functional defined in \eqref{maincost} associated with player \(j=1,2\),  
and let \( U_j \) denote the corresponding admissible control set.  
A pair of admissible controls \( (\bar{u}_1, \bar{u}_2) \in U_1 \times U_2 \) 
constitutes a Nash equilibrium if and only if the following first-order optimality conditions hold:
\begin{equation}\label{eq:nash_eq}
\begin{cases}
\displaystyle 
\frac{D J_1(\bar u_1,\bar u_2)}{D u_1} \cdot (u_1-\bar u_1) \;\geq\; 0, 
& \forall\, u_1 \in U_1, \\[1.2ex]
\displaystyle 
\frac{D J_2(\bar u_1,\bar u_2)}{D u_2} \cdot (u_2-\bar u_2) \;\geq\; 0, 
& \forall\, u_2 \in U_2.
\end{cases}
\end{equation}
\end{lemma}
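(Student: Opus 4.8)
The plan is to observe that the Nash equilibrium condition \eqrefc{eq:nash_solution_definition} decouples into two independent one-player minimization problems—one for each player with the opponent's strategy held fixed—and then to invoke the variational inequality characterization from \lemref{variq} on each component separately.

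First I would reinterpret the definition. The inequality $J_1(\bar u_1,\bar u_2)\le J_1(u_1,\bar u_2)$ for all $u_1\in U_1$ says exactly that $\bar u_1$ is a global minimizer over $U_1$ of the partial map $u_1\mapsto J_1(u_1,\bar u_2)$, with $\bar u_2$ frozen. Symmetrically, the second inequality says $\bar u_2$ minimizes $u_2\mapsto J_2(\bar u_1,u_2)$ over $U_2$, with $\bar u_1$ frozen. Thus being a Nash equilibrium is logically equivalent to the conjunction of these two separate optimality statements.

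Next I would apply \lemref{variq} to each player. Taking $j=1$ with fixed index $i=2$, the lemma gives that $\bar u_1$ minimizes $J_1(\cdot,\bar u_2)$ over $U_1$ if and only if the first-order variational inequality
\[
\Bigl(\tfrac{D J_1(\bar u_1,\bar u_2)}{D u_1},\,u_1-\bar u_1\Bigr)_{L^2(\Omega\times(0,T))}\ge 0,\qquad \forall\, u_1\in U_1,
\]
holds; the analogous application with $j=2$, $i=1$ characterizes optimality of $\bar u_2$. Conjoining the two equivalences immediately yields the system \eqrefc{eq:nash_eq}, establishing the claimed ``if and only if.''

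The only substantive point requiring care is the sufficiency direction, namely that a pair satisfying the two variational inequalities is in fact a Nash equilibrium. This is precisely where the convexity of each reduced functional in its own control variable—already recorded before the statement—is indispensable: for a convex functional minimized over a convex admissible set, the first-order variational inequality is both necessary and sufficient for global optimality, whereas for a non-convex objective it would only be necessary. I would therefore note that each $U_j$ is convex, being defined by the pointwise box constraints $l_j\le u\le r_j$, and that $u_j\mapsto J_j$ is convex (indeed strictly convex, owing to the positive Tikhonov term $\mu_j>0$), so that \lemref{variq} applies in both directions. No additional coupling between the players enters the argument, since each best-response problem is treated with the other control held fixed.
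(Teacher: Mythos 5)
Your proposal is correct and follows essentially the same route as the paper: the paper's own proof of Lemma~\ref{l3} is a one-line combination of the Nash equilibrium definition \eqref{eq:nash_solution_definition} with the variational inequality characterization of Lemma~\ref{variq}, applied player by player, which is exactly your decoupling argument. Your additional remark that convexity of each reduced functional and of the box-constrained sets $U_j$ is what makes the variational inequality sufficient (not merely necessary) is a worthwhile elaboration of a point the paper leaves implicit in its remarks preceding Lemma~\ref{variq}.
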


\begin{proof}
By combining the definition of a Nash equilibrium (see \defref{eq:nash_solution_definition}) with Lemma \ref{variq} on variational inequalities, we conclude that a pair \((\bar u_1,\bar u_2) \in U_1 \times U_2\) is a Nash equilibrium if and only if it satisfies the coupled system of variational inequalities given in \eqref{eq:nash_eq}.
\end{proof}

\subsection{Formal Lagrangian Formulation for the Modified Optimal Control Problem:}
Here, we derive the first-order necessary and sufficient optimality conditions for the optimal control problem defined by equations \eqref{eq:bi-objective_control_equation} with a modified Lagrangian, following the approach described in \cite[Section 3.1]{antil2016space}. Although the computations are formal, they provide insight into the correct form of the optimality conditions through a simple and intuitive procedure.

\noindent Letting \( p_j \) denote the adjoint variable, the Lagrangian \( \mathcal{L}_j : \mathbb{V} \times U_j \times \mathbb{V} \to \mathbb{R} \) is defined as:
\[
\mathcal{L}_j(w, u_j, p_j) = J_j( u_1,u_2) - \int_Q \left( \partial_t^\gamma w + (-\Delta)^s w - f - B_1 u_1 + B_2 u_2 \right) p_j \, dx \, dt. \label{eq:Lagrangian}
\]
The necessary and sufficient optimality conditions at the optimal triplet \( (\bar{w}, \bar{u}_j, \bar{p}_j) \) \cite{ antil2016space,troltzsch2010optimal} are:
\begin{align}
\frac{D\mathcal{L}_j(\bar{w}, \bar{u}_j, \bar{p}_1)}{Dp_j}. h &= 0, & \forall h \in \mathbb{V}, \label{eq:D_pL} \\
\frac{D\mathcal{L}_1(\bar{w}, \bar{u}_1, \bar{p}_1)}{Dw} .h &= 0, & \forall h \in \mathbb{V}, \text{ with } h(0) = 0, \label{eq:D_wL} \\
\frac{D\mathcal{L}_j(\bar{w}, \bar{u}_j, \bar{p}_j)}{Du_j}.(u_j - \bar{u}_j) &\geq 0, & \forall u_j \in U_j. \label{eq:D_uL}
\end{align}

The expression $\frac{D\mathcal{L}_j(\bar{w}, \bar{u}_j, \bar{p}_j)}{Dp_j}\,\cdot h$ denotes the \textit{directional derivative of \(\mathcal{L}_j\) with respect to \(p_j\)} at the point \((\bar{w}, \bar{u}_j, \bar{p}_j)\) in the direction of \(h\). Similarly, directional derivatives with respect to \(\,w\) and \(\,u_j\) can be defined in the same way.

\subsection{Integration by Parts:}
We start with the constraint term in the Lagrangian:
\[
\int_{L^2(\Omega \times (0,T))} \left( \partial_t^\gamma \bar{w} + (-\Delta)^s \bar{w} - f - B_1 \bar{u}_1 + B_2 u_2 \right) \bar{p}_j \, dx \, dt. \label{eq:constraint_term}
\]
Using the integration-by-parts formula for fractional derivatives (see Lemma~\ref{lemma:frac_ibp}), we obtain  
\[
\int_{L^2(\Omega \times (0,T))} \left( \partial_t^\gamma \bar{w} \right) \bar{p}_j \, dx \, dt = \int_{L^2(\Omega \times (0,T))} \left( \partial_{T-t}^\gamma \bar{p}_j \right) \bar{w} \, dx \, dt - \int_\Omega \bar{w}(0) (I_{T-t}^{1-\gamma} \bar{p}_j)(0) \, dx + \int_\Omega \bar{p}_j(T) (I_t^{1-\gamma} \bar{w})(T) \, dx, \label{eq:ibp_time}
\]
where the fractional integrals are:
\begin{align*}
(I_t^{1-\gamma} \bar w)(T) &= \frac{1}{\Gamma(1-\gamma)} \int_0^T (t-s)^{-\gamma} \bar w(s) \, ds, \\
(I_{T-t}^{1-\gamma} \bar p_j)(t) &= \frac{1}{\Gamma(1-\gamma)} \int_t^T (s-t)^{-\gamma} \bar p_j(s) \, ds.
\end{align*}
For the fractional Laplacian, assuming that \( (-\Delta)^s \) is self-adjoint, we have:
\[ 
\int_Q \left( (-\Delta)^s \bar{w} \right) \bar{p}_j \, dx \, dt = \int_Q \bar{w} \left( (-\Delta)^s \bar{p}_j \right) \, dx \, dt. \label{eq:ibp_space}
\]
Thus, the constraint term \eqref{eq:constraint_term} becomes:
\begin{align*}
&\int_{L^2(\Omega \times (0,T))} \left( \partial_t^\gamma \bar{w} + (-\Delta)^s \bar{w} - f - B_1 \bar{u}_1 + B_2 u_2 \right) \bar{p}_j \, dx \, dt \notag \\
&= \int_{L^2(\Omega \times (0,T))} \left( \partial_{T-t}^\gamma \bar{p}_j + (-\Delta)^s \bar{p}_j \right) \bar{w} \, dx \, dt - \int_{L^2(\Omega \times (0,T))} (f + B_1 \bar{u}_1 - B_2 u_2) \bar{p}_j \, dx \, dt \notag \\
&\quad - \int_\Omega \bar{w}(0) (I_{T-t}^{1-\gamma} \bar{p}_j)(0) \, dx + \int_\Omega \bar{p}_j(T) (I_t^{1-\gamma} \bar{w})(T) \, dx. \label{eq:constraint_rewritten}
\end{align*}

\subsubsection{Condition \eqref{eq:D_wL}: Derivative with Respect to \( w \):}
Based on the previous computation, we can rewrite \eqref{eq:D_uL} in the form
\begin{equation}\label{eq:3.6}
\frac{D\mathcal{L}_j(\bar w,\bar u_j,\bar p_j)}{Dw}.h 
= (\bar w-w_j,h)_{L^2(Q)} 
- (\partial_t^\gamma \bar p_j + (-\Delta)^s \bar p_j, h)_{L^2(\Omega \times (0,T))}
- (\bar p_j(T), (I_t^{1-\gamma}h)(T))_{L^2(\Omega)} = 0,
\end{equation}
for all \(h \in W\) such that \(h(0)=0\).

Now, let \(\varphi \in C_0^\infty(0,T)\) and \(\psi \in C_0^\infty(\Omega)\) be arbitrary. Define
\[
h(x,t) = \psi(x)\,\phi(t),
\]
where \(\phi\) solves the Abel integral equation
\[
(I_t^{1-\gamma}\phi)(t) = \varphi(t).
\]
The unique solvability of this Abel equation is guaranteed by \cite[Section~2.2, Theorem~2.1]{samko1993fractional}, with this choice of \(\phi\), we observe that
\[
(I_t^{1-\gamma}h)(T)=0.
\]

To derive the adjoint equation, choose \( h = \psi \phi \), where \( \psi \in C_0^\infty(\Omega) \), and \( \phi \in C_0^\infty(0,T) \) solves the Abel equation \( (I_t^{1-\gamma} \phi)(t) = \varphi(t) \), with \( \varphi \in C_0^\infty(0,T) \). By \cite[Theorem 2.1]{samko1993fractional}, this is solvable, and \( (I_t^{1-\gamma} h)(T) = \psi (I_t^{1-\gamma} \phi)(T) = \psi \varphi(T) = 0 \). Substituting into \eqref{eq:3.6}:
\[
\int_0^T \phi(t) \left\langle \partial_{T-t}^\gamma \bar{p}_j + (-\Delta)^s \bar{p}_j - (\bar{w} - w_j), \psi \right\rangle_{L^2(\Omega)} \, dt = 0. 
\]
Since the range of \( I_t^{1-\gamma} \) includes all smooth, compactly supported functions (see \cite[Theorems 13.2, 13.5]{samko1993fractional}), and \( \psi \in C_0^\infty(\Omega) \) is arbitrary, we obtain \eqref{eq:adjoint_eq}. Now we will see the terminal condition. 
\smallskip

\noindent\textbf{Derivation of the adjoint equation.}  
To proceed, let $\varphi \in C_0^\infty(0,T)$ and $\psi \in C_0^\infty(\Omega)$ be arbitrary.  
Define the test function
\[
h(x,t) = \psi(x)\phi(t),
\]
where $\phi$ solves the Abel integral equation
\[
(I_t^{1-\gamma}\phi)(t) = \varphi(t), \qquad \varphi \in C_0^\infty(0,T).
\]
The unique solvability of this Abel equation is guaranteed by \cite[Section~2.2, Theorem~2.1]{samko1993fractional}.  
With this choice, we observe that $(I_t^{1-\gamma}h)(T)=0$.  

Substituting into the equation \eqref{eq:3.6} gives
\[\label{eq:test_form}
\int_0^T \varphi(t)\, \big(\partial_{T-t}^\gamma \bar p_j + (-\Delta)^s \bar p_j - (\bar w-w_j), \psi\big)_{L^2(\Omega)} \,dt = 0.
\]
Since the range of $I_t^{1-\gamma}$ contains all smooth, compactly supported functions (see \cite[Theorems 13.2, 13.5]{samko1993fractional}), and $\psi \in C_0^\infty(\Omega)$ is arbitrary, we conclude that
\begin{equation}\label{eq:adjoint_eq}
\partial_{T-t}^\gamma \bar p_j + (-\Delta)^s \bar p_j = \bar w - w_j \quad \text{in } \Omega\times (0,T).
\end{equation}
For the terminal condition, consider the term:
\[
\left( \bar{p}_j(T), (I_t^{1-\gamma} h)(T) \right)_{L^2(\Omega)} = 0. \label{eq:terminal_term}
\]
Set \( h = \ell_\epsilon(t) \chi \), with \( \chi \in C_0^\infty(\Omega) \), and:

\[
\ell_\epsilon(t) = \begin{cases} 
\epsilon^{-\gamma} T^{-\gamma} t^\gamma, & 0 < t \leq \epsilon T, \\
1, & \epsilon T < t \leq T.
\end{cases} 
\]
Then:
\begin{equation}
\left( \bar{p}_j(T), \, \big(I_t^{1-\gamma} \ell_\epsilon(t)\chi\big)(T) \right)_{L^2(\Omega)}
=  \, \big(I_t^{1-\gamma} \ell_\epsilon\big)(T)\,
\left( \bar{p}_j(T), \chi \right)_{L^2(\Omega)}= 0.
\label{eq:terminal_condition}
\end{equation}

As \( \epsilon \to 0 \), \( (I_t^{1-\gamma} \ell_\epsilon)(T) \to (I_t^{1-\gamma} 1)(T) = \frac{T^{1-\gamma}}{\Gamma(2-\gamma)} > 0 \), so \( \bar{p}_j(T) = 0 \).
Collecting the derived equations, our formal argument yields the following strong system for the adjoint variable $p_j$.
Let us introduce the adjoint state $p_j$ to get a convenient expression.

\begin{definition}[Fractional Adjoint State]
Given controls \( u_1, u_2 \in L^2((0,T); H^{-s}(\Omega)) \), let 
\( w = w(B_1 u_1 + B_2 u_2) \) denote the solution to the state system. 
Then the adjoint states \( p_j \in \mathbb{V} \), \( j=1,2 \), are defined as the unique weak solutions to the following backward-in-time space-time fractional equations:
\[
\begin{cases}
\partial_{T-t}^\gamma p_j + (-\Delta)^s p_j = w - w_j & \text{in } \Omega \times (0,T), \\
p_j = 0 & \text{on } \Omega^c\times (0,T), \\
p_j(T) = 0 & \text{in } \Omega,
\end{cases}
\quad j=1,2.
\label{eq:fractional_adjoint_state}
\]

The functions \( p_1(u_1, u_2) \) and \( p_2(u_1, u_2) \) are referred to as the \textit{fractional adjoint states} corresponding to the control pair \( (u_1, u_2) \). The well-posedness of such backward-in-time fractional equations can be established using standard theory for linear fractional parabolic problems (see \cite{jin2016analysis}).
\end{definition}

\subsubsection{Condition \eqref{eq:D_pL}: Derivative with Respect to \( p_j \):}
The Fr{\'e}chet derivative of \( \mathcal{L}_j \) with respect to \( p_j \) in the direction \( h \in \mathbb{V} \) is:
\begin{align*}
\frac{D\mathcal{L}_j(\bar{w}, \bar{u}_j, \bar{p}_1)}{Dp_j} .h &= \lim_{\epsilon \to 0} \frac{\mathcal{L}_j(\bar{w}, \bar{u}_j, \bar{p}_j + \epsilon h) - \mathcal{L}_j(\bar{w}, \bar{u}_j, \bar{p}_j)}{\epsilon} \notag \\
&= - \int_Q \left( \partial_t^\gamma \bar{w} + (-\Delta)^s \bar{w} - f - B_1 \bar{u}_1 + B_2 u_2 \right) h \, dx \, dt. \label{eq:D_pL_deriv}
\end{align*}
Setting \( \frac{D\mathcal{L}_j}{Dp_j}(\bar{w}, \bar{u}_j, \bar{p}_j).h = 0 \) for all \( h \in \mathbb{V} \), we obtain the \textit{state equation}:
\eqref{eq:mainstate}

\subsubsection{Condition \eqref{eq:D_uL}: Derivative with Respect to \( u_j \):}
The Fr{\'e}chet derivative with respect to \( u_j \) in the direction \( u_j - \bar{u}_j \in U_j \) is:
\begin{align*}
\frac{D\mathcal{L}_j(\bar{w}, \bar{u}_j, \bar{p}_j)}{Du_j}.(u_j - \bar{u}_j) &= \frac{D J_j(\bar{w}, \bar{u}_j)}{Du_j}.(u_j - \bar{u}_j) + \int_{\Omega \times (0,T)} (B_j (u_j - \bar{u}_j)) \bar{p}_j \, dx \, dt \notag \\
&= \frac{D J_j(\bar{w}, \bar{u}_j)}{Du_j}.(u_j - \bar{u}_j) + (u_j - \bar{u}_j, B_j^* \bar{p}_j)_{L^2(\Omega \times (0,T)}, 
\end{align*}
where \( B_j^* \) is the adjoint operator of \( B_j \). 

Assuming \( J_j(w, u_j) = \frac{1}{2} \| w - w_j \|_{L^2(L^2(\Omega \times (0,T)}^2 + \frac{\mu_j}{2} \| B_j u_j \|_{L^2(L^2(\Omega \times (0,T)}^2 \), we have:
\[
\frac{D J_j(\bar{u}_j, \bar{u}_2)}{Du_j}.(u_j - \bar{u}_j) = \mu_j (B_j \bar{u}_j, B_j (u_j - \bar{u}_j))_{L^2(\Omega \times (0,T))} = \mu_j (B_j^* B_j \bar{u}_j, u_j - \bar{u}_j)_{L^2(\Omega \times (0,T))}.
\]
Thus, the the necessary and suﬃcient first-order variational inequality is:
\begin{equation}
    (\mu_j B_j^* B_j \bar{u}_j + B_j^* \bar{p}_j, u_j - \bar{u}_j)_{L^2(L^2(\Omega \times (0,T)} \geq 0 \quad \forall u_j \in U_j. \label{eq:variational_ineq}
\end{equation}

\paragraph{Remarks.}  
\begin{itemize}
  \item The operator \(B_j\), together with its adjoint \(B_j^*\), is always to be understood  in Hilbert spaces. 
  This ensures that all inner products are well-defined and the functional setting remains rigorous.  
  \item The structure of the argument is inherently symmetric with respect to the control variables. Hence, the same line of reasoning can be applied when optimizing with respect to \(u_i\), while treating \(u_j\) as fixed for $j \neq i$
  \item The quantity  
  \[
    \frac{Dw(\bar u_1,\bar u_2)}{Du_j}\cdot B_j(u_j-\bar u_j)
  \]
  should be viewed as the \emph{linearized effect of a perturbation in the control} on the state. 
  This captures the appropriate form of the directional derivative that is crucial in the analysis of Nash equilibrium.
\end{itemize}

The operator $B_j$ introduced in \eqref{eq:control-extension} is linear and bounded. To identify its adjoint
\[
\begin{split}
\bigl( B_j u_j, p_j \bigr)_{L^2(\Omega \times (0,T))} 
&= \int_{\Omega \times (0,T)} (B_j u_j)(x,t)\, p_j(x,t)\, dxdt \\[4pt]
&= \int_{\omega_j \times (0,T)} (B_j u_j)(x,t)\, p_j(x,t)\, dxdt 
   + \int_{(\Omega \setminus \omega_j) \times (0,T)} (B_j u_j)(x,t)\, p_j(x,t)\, dxdt \\[4pt]
&= \int_{\omega_j \times (0,T)} u_j(x,t)\, p_j(x,t)\, dxdt \,+\, 0 \\[4pt]
&= \bigl( u_j, \chi_{\omega_j \times (0,T)} p_j \bigr)_{L^2(\omega_j \times (0,T))}.
\end{split}
\]
Here, $\chi_{\omega_j \times (0,T)}$ denotes the characteristic function of the set 
$\omega_j \times (0,T)$, which equals $1$ on $\omega_j \times (0,T)$ and vanishes elsewhere. Therefore, the adjoint operator is defined as follows:
\[
B_j^* : L^2(\Omega \times (0,T)) \;\longrightarrow\; L^2(\omega_j \times (0,T)),
\]
which can be written explicitly as
\[\label{eq:adjoint-operator}
B_j^* p_j = \chi_{\omega_j \times (0,T)}\, p_j, 
\qquad \forall \, p_j \in L^2(\Omega \times (0,T)).
\]
Hence, the action of $B_j^*$ is nothing but the restriction of a function $p_j$ 
defined on the entire space–time cylinder $\Omega \times (0,T)$ 
to the local control region $\omega_j \times (0,T)$, realized by multiplication 
with the characteristic function $\chi_{\omega_j \times (0,T)}$.

\noindent\textbf{Optimality System.}  
Altogether, it follows that the solution to the main problem 
\eqref{eq:bi-objective_control_equation} is characterized by the simultaneous solution of the state 
$\bar w(\bar u_1, \bar u_2)$ at $(\bar u_1, \bar u_2)$ of the 
\textbf{state equation}~\eqref{eq:mainstate}, the \textbf{adjoint equations}~\eqref{eq:fractional_adjoint_state}, 
together with the \textbf{variational inequalities}~\eqref{eq:variational_ineq}, which are satisfied for both $j=1,2$.

\begin{equation} \label{os}
\left\{
\begin{aligned} 
&\partial_t^\gamma \bar w + (-\Delta)^s \bar w = B_1 \bar u_1 + B_2 \bar u_2 + f 
&& \text{in } \Omega \times (0,T), \quad 
\bar w=0 \text{ on } \Omega^c\times (0,T), \quad  
\bar w(0)=w_0 \text{ in }\Omega, \\[6pt]
&\partial_{T-t}^\gamma \bar p_j + (-\Delta)^s \bar p_j = \bar w - w_j 
&& \text{in } \Omega \times (0,T), \quad  
\bar p_j=0 \text{ on } \Omega^c\times (0,T), \quad  
\bar p_j(T)=0 \text{ in }\Omega, \\[6pt]
&\big(\mu_j B_j^\ast B_j \bar u_j - B_j^\ast \bar p_j, \, u_j - \bar u_j \big) \geq 0 
&& \forall u_j \in U_j.
\end{aligned}
\right.
\end{equation}

We emphasize that the analysis in~\cite{borzi2013formulation, ramos2002nash} is carried out in the elliptic setting. In contrast, our problem is \emph{parabolic} and involves fractional derivatives in both space and time.  Accordingly, while we borrow parts of the variational-inequality and game-theoretic methodology from the 
elliptic case, the parabolic fractional setting necessitates a different functional setup and several 
technical modifications (e.g., time-dependent test spaces and coercivity estimates adapted to fractional 
operators). In particular, the state equation is posed in an evolution setting with a spatial fractional 
operator and a time-fractional derivative, and the resulting Nash system is formulated as a time-dependent 
variational inequality on appropriate Sobolev spaces.
\[
H := L^2(\Omega \times (0,T))^2, \qquad 
(u,v)_H := (u_1,v_1)_{L^2(\Omega \times (0,T))} + (u_2,v_2)_{L^2(\Omega \times (0,T))}, 
\quad u,v \in H,
\]
where $(\cdot,\cdot)_H$ and $\|\cdot\|$ denote the inner product and norm on $H$.

\medskip
\noindent
Equivalently, the optimality system \eqref{os} can be reinterpreted in compact form as
\[
\big( \mathcal{A}(\bar u_1,\bar u_2) - (b_1,b_2), \, (u_1-\bar u_1,u_2-\bar u_2) \big)_{H} \geq 0,
\qquad \forall (u_1,u_2) \in U_1 \times U_2, 
\]
where the operator $\mathcal A:H \to H$ is defined by 
$$\mathcal A(u_1,u_2):= (\mu_1 B_1^*B_1 u_1+B_1^*\tilde p_1,\mu_2 B_2^*B_2 u_2+B_2^*\tilde p_2 ) \in H,$$
For each $(u_1,u_2)\in H$, we first compute the auxiliary state $\tilde w$ by solving  
\begin{equation}
\begin{cases}
\partial_t^\gamma \tilde w + (-\Delta)^s \tilde w \;=\; B_1 u_1 + B_2 u_2 & \text{in } \Omega\times (0,T),  \\
\tilde w = 0 & \text{in } \Omega^c\times (0,T),  \\
\tilde w(0)=0 & \text{in } \Omega .
\end{cases}
\label{eq:equation_solving_w}
\end{equation}

In the next step, the adjoint variables $\tilde p_j$ for $j=1,2$ are obtained by solving
\begin{equation}
\begin{cases}
\partial_{T-t}^\gamma \tilde p_j + (-\Delta)^s \tilde p_j \;=\; \tilde w & \text{in } \Omega\times (0,T), \\
\tilde p_j = 0 & \text{on } \Omega^c \times (0,T), \\
\tilde p_j(T)=0 & \text{in } \Omega ,
\end{cases}
\quad j=1,2.
\label{eq:equation_solving_p1_p2}
\end{equation}

Notice that  $\tilde p_1$ and $\tilde p_2$ coincide in our case. However, they would be different, in general, considering different tracking functionals.

We have that $\mathcal A$ results in being the linear part of the optimality conditions operator, which is an affine mapping. In fact, the inhomogeneous term $b=(b_1,b_2) $ is defined in terms of $f$ and getting desired functions $w_1$ and $w_2$, and it is zero when these functions are zero. Specifically, the construction of $b_1$ and $b_2$ proceed as follows.
Define $\hat w$ as the solution of 
\[\
\begin{cases}
\partial ^ \gamma _t \hat w + (-\Delta)^s \hat w=f  \text{ in } \Omega\times (0,T), \\
\hat w=0 \text{ on } \Omega^c \times (0,T), \\
 \hat w(0)=w_0  \text{ in } \Omega. \\
\end{cases}   \label{eq:equation_solving_w_cap}
\]
Hence, $b_1$ and $b_2$ are given by $b_j=B_j^*\hat p_j$ for $j=1,2$ solves the equation 
\[
\begin{cases}
\partial ^ \gamma _{T-t} \hat p_j + (-\Delta)^s\hat p_j=\hat w-w_j  \text{ in } \Omega\times (0,T), \\
 \hat p_j=0 \text{ on } \Omega^c \times (0,T), \\
 \hat p_j(T)=0  \text{ in } \Omega.  \\
\end{cases}   \label{eq:equation_solving_pj_cap}
\]

We can express the solution of the state system~\eqref{eq:mainstate} and, for $j=1,2$, of~\eqref{eq:equation_solving_p1_p2} as  
\[
w = \tilde{w} + \hat{w}, \qquad 
p_i = \tilde{p}_j + \hat{p}_j,
\]
respectively.
 
Now, we define the mapping $a:H\times H \to \mathbb{R}$ is defined by 
$$a(u,v):= (\mathcal A u-b,v)_H \quad \forall u,v \in H.$$
Thus, $\bar u=(\bar u_1,\bar u_2)$ is a Nash equilibrium if and only if it satisfies the following variational inequality. 
\begin{equation}\label{mainvariational}
a(\bar u , u-\bar u ) \geq 0 \quad \forall u \in U_1 \times U_2.
\end{equation}

\begin{proposition}\label{proposition}
Assuming the mappings $B_j^*B_j$ are coercive. Then, the mapping $a: H\times H \to \mathbb R$ from the above equations is bilinear, continuous, and coercive. 
\end{proposition}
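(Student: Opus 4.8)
The plan is to treat $a$ as the sum of a principal bilinear part $a_0(u,v):=(\mathcal A u,v)_H$ and the linear shift $-(b,v)_H$; the shift by the fixed load $b$ affects neither bilinearity in the principal variables nor the quadratic form $a(u,u)$ that governs coercivity (it enters only as the load functional of the variational inequality \eqref{mainvariational}), so it suffices to establish the three properties for $a_0$. First I would record that the maps underlying $\mathcal A$ are linear and bounded: the state map $S:u=(u_1,u_2)\mapsto\tilde w$ from \eqref{eq:equation_solving_w} is linear in $u$, since its source $B_1u_1+B_2u_2$ is linear and the data are homogeneous, and bounded $H\to L^2(Q)$ by the well-posedness estimate for the forward space--time fractional equation (see \cite{antil2016space}); similarly the adjoint map $\tilde w\mapsto\tilde p_j$ from \eqref{eq:equation_solving_p1_p2} is linear and bounded on $L^2(Q)$ by the corresponding backward theory (see \cite{jin2016analysis}). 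As $B_j,B_j^*$ are bounded linear operators, each component $u\mapsto\mu_jB_j^*B_ju_j+B_j^*\tilde p_j(u)$ of $\mathcal A$ is a finite sum and composition of bounded linear maps, so $\mathcal A:H\to H$ is linear and bounded. Consequently $a_0$ is bilinear, and Cauchy--Schwarz gives $|a_0(u,v)|\le\|\mathcal A\|\,\|u\|_H\|v\|_H$, i.e.\ continuity.

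For coercivity I would expand the diagonal form and move the cross terms onto $B_ju_j$ by definition of the adjoint,
\[
a_0(u,u)=\sum_{j=1}^{2}\Big(\mu_j\,(B_j^*B_ju_j,u_j)_{L^2(Q)}+(B_j^*\tilde p_j,u_j)_{L^2(Q)}\Big)
=\sum_{j=1}^{2}\mu_j\|B_ju_j\|_{L^2(Q)}^2+\sum_{j=1}^{2}(\tilde p_j,B_ju_j)_{L^2(Q)}.
\]
The regularization part is bounded below using the hypothesis that $B_j^*B_j$ is coercive, namely $\|B_ju_j\|_{L^2(Q)}^2=(B_j^*B_ju_j,u_j)\ge c_j\|u_j\|_{L^2(Q)}^2$ for some $c_j>0$. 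The crux is to show that the remaining cross sum is nonnegative.

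The decisive observation is that $\tilde p_1=\tilde p_2=:\tilde p$, because both solve \eqref{eq:equation_solving_p1_p2} with the identical source $\tilde w$; hence the cross sum collapses to $(\tilde p,\,B_1u_1+B_2u_2)_{L^2(Q)}$. Substituting the state equation \eqref{eq:equation_solving_w} rewrites this as $(\tilde p,\,\partial_t^\gamma\tilde w+(-\Delta)^s\tilde w)_{L^2(Q)}$. I would then apply the fractional integration-by-parts identity \lemref{lemma:frac_ibp} to the temporal term and the self-adjointness of $(-\Delta)^s$ to the spatial term; crucially, the endpoint contributions $(\tilde p(T),(I_t^{1-\gamma}\tilde w)(T))$ and $(\tilde w(0),(I_{T-t}^{1-\gamma}\tilde p)(0))$ vanish thanks to the homogeneous data $\tilde w(0)=0$ and $\tilde p(T)=0$. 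This transfers both operators onto $\tilde p$ and yields $(\partial_{T-t}^\gamma\tilde p+(-\Delta)^s\tilde p,\,\tilde w)_{L^2(Q)}$, which by the adjoint equation \eqref{eq:equation_solving_p1_p2} equals $\|\tilde w\|_{L^2(Q)}^2\ge0$. Therefore the cross sum is nonnegative, and combining with the lower bound on the regularization part gives
\[
a_0(u,u)\ \ge\ \sum_{j=1}^{2}\mu_j c_j\|u_j\|_{L^2(Q)}^2\ \ge\ \alpha\,\|u\|_H^2,\qquad \alpha:=\min_{j}\mu_j c_j>0,
\]
which is the desired coercivity.

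The main obstacle is exactly this cross-term estimate: a priori it carries no sign, and the nonlocal-in-time Caputo structure forbids the naive parabolic integration by parts. The delicate points are justifying \lemref{lemma:frac_ibp} for $\tilde w,\tilde p$ in the solution space $\mathbb V$ --- ensuring enough temporal regularity and continuity for the fractional integrals $I_t^{1-\gamma}\tilde w$ and $I_{T-t}^{1-\gamma}\tilde p$ and their endpoint evaluations to be meaningful --- and confirming the self-adjointness of $(-\Delta)^s$ on $H_0^s(\Omega)$ under the homogeneous exterior condition. Once these are secured, the identification of the cross sum with $\|\tilde w\|_{L^2(Q)}^2$ is the clean simplification that makes coercivity (and hence, via the Lions--Stampacchia theorem, existence and uniqueness of the Nash equilibrium) follow immediately.
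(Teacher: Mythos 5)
Your proof is correct and follows essentially the same route as the paper's: bilinearity and continuity from the linearity and boundedness of $\mathcal A$, and coercivity by isolating the regularization terms (handled by the assumed coercivity of $B_j^*B_j$) and showing the cross terms are nonnegative via the key observation $\tilde p_1=\tilde p_2$, substitution of the state equation, transfer of the operators onto $\tilde p$, and the identification with $\|\tilde w\|_{L^2(Q)}^2$. Your write-up is in fact more careful than the paper's on two points it glosses over --- the explicit invocation of Lemma~\ref{lemma:frac_ibp} with verification that the endpoint terms vanish because $\tilde w(0)=0$ and $\tilde p(T)=0$, and the separation of the affine load $b$ so that ``bilinear'' genuinely refers to the principal part $(\mathcal A u,v)_H$ --- but these are refinements of the same argument, not a different one.
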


\begin{proof}
It is evident that the operator $\mathcal A$ is continuous, bounded, and linear in $H$. As a result, we immediately determine that $a$ is continuous and bilinear. We are going to show the coercivity of $a$. Consider 
\[
\begin{split}
(\mathcal A(u_1,u_2),(u_1,u_2))_H & = 
 \mu_1(B_1^*B_1u_1,u_1)_{L^2(\omega_1\times (0,T))}
 + \mu_2(B_2^*B_2u_2,u_2)_{L^2(\omega_2\times (0,T))}\\
& + \int_{\omega_1 \times (0,T)} (B_1^*p_1)u_1 \,\mathrm{d}x\, \mathrm{d}t + \int_{\omega_2 \times (0,T)} (B_2^*p_2)u_2 \,\mathrm{d}x\, \mathrm{d}t.
 \end{split}   \label{eq:inner_product_(Au,u)}
\] 

In view of the assumed coercivity of $B_j^*B_j$, the statement follows if we are able to show that the last two terms are non-negative. 
To this end, we first recall that we have $\tilde p_1 =\tilde  p_2 $ in our setting \eqref{eq:equation_solving_p1_p2}. Therefore, we obtain 
\begin{equation*}
\begin{split}
\int_{\omega_1 \times (0,T)} (B_1^*p_1)u_1 \,\mathrm{d}x\, \mathrm{d}t + \int_{\omega_2 \times (0,T)} (B_2^*p_2)u_2 \,\mathrm{d}x\, \mathrm{d}t 
& = \int_{\Omega\times (0,T)} \tilde p_1(B_1u_1)\,\mathrm{d}x\, \mathrm{d}t +  \int_{\Omega\times (0,T)} \tilde p_2(B_2u_2)\,\mathrm{d}x\, \mathrm{d}t \\
& =\int_{\Omega \times (0,T)} \tilde p_1(B_1u_1+ B_2u_2)\, \mathrm{d}x\,\mathrm{d}t\\
& = \int_{\Omega \times (0,T)} \tilde p_1(\partial ^ \gamma _t \tilde w + (-\Delta)^s\tilde w)\,\mathrm{d}x\,\mathrm{d}t\\
 &= \int_{\Omega \times (0,T)} (\partial ^ \gamma _{T-t} \tilde p_1 + (-\Delta)^s \tilde p_1 )\tilde w \, \mathrm{d}x\,\mathrm{d}t \\
 & = \int_{\Omega \times (0,T)} \tilde w \tilde w \,\mathrm{d}x\,\mathrm{d}t.
   \end{split}
   \end{equation*}
Hence, the last two terms are non-negative. Therefore, coercive.
\end{proof}

\begin{theorem}\label{thm:nash-equilibrium}
The reduced problem \eqref{eq:nash_solution_definition} admits a unique Nash equilibrium. 
\end{theorem}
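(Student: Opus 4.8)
The plan is to reduce the existence and uniqueness of the Nash equilibrium to a single variational inequality on the product Hilbert space and then invoke the classical Lions--Stampacchia theorem. By \lemref{l3} together with the compact reformulation leading to \eqref{mainvariational}, a pair $(\bar u_1,\bar u_2)$ is a Nash equilibrium in the sense of \eqref{eq:nash_solution_definition} if and only if it solves
\[
a(\bar u,\,u-\bar u)\ge 0\qquad\forall\,u\in U_1\times U_2 ,
\]
so it suffices to show that this variational inequality has a unique solution in $U_1\times U_2$.

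First I would fix the functional setting: take $H=L^2(\Omega\times(0,T))^2$ with the inner product $(\cdot,\cdot)_H$ introduced above, and set $K:=U_1\times U_2$. I would verify that $K$ is nonempty, closed, and convex in $H$. Nonemptiness follows from $l_j\le r_j$ a.e.\ (so each $U_j$ contains, for instance, the function $l_j$, which lies in $L^2$ since $l_j\in L^\infty$ on a bounded domain); convexity is immediate from the pointwise box constraints; and closedness follows because the a.e.\ inequalities $l_j\le u_j\le r_j$ are preserved under $L^2$-convergence, passing to an a.e.-convergent subsequence.

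Next I would isolate the affine structure of $a$. Writing $a(u,v)=(\mathcal A u,v)_H-(b,v)_H$, I would treat $\tilde a(u,v):=(\mathcal A u,v)_H$ as the bilinear form and $L(v):=(b,v)_H$ as a bounded linear functional on $H$, continuity of $L$ being immediate from the Cauchy--Schwarz inequality. By \propref{proposition}, the form $\tilde a$ is bilinear, continuous, and coercive on $H$, so the variational inequality \eqref{mainvariational} is equivalent to $\tilde a(\bar u,\,u-\bar u)\ge L(u-\bar u)$ for all $u\in K$. I would then apply the Lions--Stampacchia theorem: for a continuous, coercive bilinear form on a Hilbert space, a nonempty closed convex constraint set, and a bounded linear functional, there exists a unique element of the constraint set solving the associated variational inequality. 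This produces a unique $\bar u=(\bar u_1,\bar u_2)\in K$ satisfying \eqref{mainvariational}, which by the equivalence above is the unique Nash equilibrium.

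The main point requiring care is ensuring that the hypotheses of Lions--Stampacchia genuinely hold in this fractional, time-dependent setting rather than borrowing them from the classical case. The decisive ingredient is the coercivity of $\tilde a$, which is exactly what \propref{proposition} supplies; this in turn rests on the assumed coercivity of $B_j^*B_j$ together with the cancellation coming from the fractional integration-by-parts identity \lemref{lemma:frac_ibp} (used to show that the adjoint cross-terms contribute $\|\tilde w\|_{L^2}^2\ge 0$). I would also stress that no boundedness of $K$ is needed, since coercivity of $\tilde a$ already compensates for the unbounded admissible set $U_1\times U_2$.
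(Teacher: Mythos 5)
Your proposal is correct and follows essentially the same route as the paper: both reduce the Nash equilibrium characterization to the variational inequality \eqref{mainvariational} via Lemma~\ref{l3}, and then invoke the Lions--Stampacchia theorem using the continuity and coercivity established in Proposition~\ref{proposition}. Your write-up is in fact somewhat more careful than the paper's --- you explicitly verify that $U_1\times U_2$ is nonempty, closed, and convex, and you correctly separate the affine map $a(u,v)=(\mathcal A u-b,v)_H$ into the bilinear form $(\mathcal A u,v)_H$ and the bounded linear functional $(b,v)_H$ before applying the theorem (the paper loosely calls $a$ itself ``bilinear'') --- but the underlying argument is identical.
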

\begin{proof}
Note that $\bar u=(\bar u_1,\bar u_2)$ the conditions of the variational inequality \eqref{mainvariational}
$$a(\bar u , u-\bar u ) \geq 0 \quad \forall v \in U_1\times U_2.$$
Then the Lions–Stampacchia theorem (see \cite{kinderlehrer2000introduction}) implies the following problem: $$a(\bar u, u-\bar u ) \geq 0 \quad \forall v \in U_1\times U_2,$$ has a unique solution because $a$ is bilinear, continuous, and coercive.
\end{proof}

\begin{algorithm*}
    \SetAlgoLined
    \SetKwInOut{Input}{Input}
    \SetKwInOut{Output}{Output}

    \textbf{STEP 1.} $(u_1^0,u_2^0)$ is given in $U_1\times U_2$\;
    
    \textbf{STEP 2.a.} $w^0$ is the solution of state equation
    
    \textbf{STEP 2.b.} For $ i=1,2,\bar p_i^0 $ is the solution of  PDE, 
		$\begin{cases} \partial ^ \gamma _{T-t} p_i^0 + (-\Delta)^s p_i^0= (w^0-w_i) \chi_{d} \text{ in } Q\\ 
		p_i^0=0 \text{ in } \Omega^c  \times(0,T)\\
		p_i^0(T)=0 \text{ in } \Omega
		\end{cases}$\;

    \textbf{STEP 2.c.}  $(g_1^0,g_2^0)=(\mu_1u^0_1+p_1^0\chi_{\omega_1},\mu_2u^0_2+p_2^0\chi_{\omega_1})\in U_1\times U_2$\;    
 
    \textbf{STEP 3.}  $(h_1^0,h_2^0)=(g_1^0,g_2^0) \in U_1\times U_2$, \;
		for $k\geq 0$, assuming that $(u_1^k,u_2^k),(g_1^k,g_2^k),(h_1^k,h_2^k)$ are known, we compute $(u_1^{k+1},u_2^{k+1}),(g_1^{k+1},g_2^{k+1}),(h_1^{k+1},h_2^{k+1})$ as given below:\;

    \textbf{STEP 4.a.}  $\bar w^k$ is the solution of the PDE, 
		$\begin{cases}   \partial ^ \gamma _t \bar w^k +(-\Delta)^s \bar w^k = h_1^k\chi_{\omega_1}+h_2^k\chi_{\omega_2}  \text{ in } \Omega\\
                                  \bar w^k =0 \text{ in }  \Omega^c \times (0,T)\\
                                   \bar w^k(0)=0  \text{ in } \Omega 
                                  \end{cases}$ \\
                                  
                                  observe $(u_1,u_2):=(h_1^k,h_2^k)$. \;     
    \textbf{STEP 4.b.} For $i=1,2, \bar p_i^k $ is the solution of  PDE,
	          
	           $\begin{cases}
	            \partial ^ \gamma _{T-t} \bar p^k+ (-\Delta)^s \bar p_i^k= \bar y^k \text{ in } Q\\
	             \bar p_i^k=0 \text{ in }  \Omega^c \times(0,T)\\
	             \bar p_i^k(T)=0 \text{ in } \Omega
	             \end{cases}$\\
		    
    	\textbf{STEP 4.d.} $\rho_k= \frac{\|(g_1^k,g_2^k)\|^2}{\int_{\omega_1\times(0,T)}\bar g_1^k h_1^k dxdt + \int_{\omega_2\times(0,T)}\bar g_2^kh_2^k dxdt}.$\;
		\textbf{STEP 5.} $(u_1^{k+1},u_2^{k+1})=(u_1^k,u_2^k)-\rho_k (h_1^k ,h_2^k).$\;
		\textbf{STEP 6.} $(g_1^{k+1},g_2^{k+1})=(g_1^k,g_2^k)-\rho_k (\bar g_1^k ,\bar g_2^k).$\;
		\textbf{STEP 7.} $\gamma_k=\frac{\|(g_1^{k+1},g_2^{k+1)}\|^2}{\|(g_1^k,g_2^k)\|^2}$.\;
		\textbf{STEP 8.} $(h_1^{k+1},h_2^{k+1})=(g_1^{k+1},g_2^{k+1})+\gamma_k (h_1^k ,h_2^k).$\;
		\textbf{STEP 9.} Do $k=k+1$,and go to  \textbf{STEP 4.}
    
    \caption{Conjugate Gradient Method}
    \label{algo}
\end{algorithm*}

\section{Numerical Experiment}\label{sec:5}

\begin{example}\label{ex:1}
Let us consider the bi-objective optimal control problem \eqref{eq:bi-objective_control_equation} with the fractional Caputo derivative and classical Laplacian operators such as 
$$
\partial ^ \gamma _t w  - \Delta w = B_1 u_1 + B_2 u_2 + f, ~~~ \text{in} \Omega
$$
The operators $B_1$ and $B_2$ are defined as
\[
B_1 u_1 = u_1|_{\omega_1}, \qquad B_2 u_2 = u_2|_{\omega_2},
\]
corresponding to the computational domains (see Figure~\ref{fig:domain})  
\[
\begin{aligned}
\Omega &= [0,1] \times [0,1], & 
\omega_d &= [0.25,0.75] \times [0.25,0.75], \\
\omega_1 &= [0,0.25] \times [0,0.25], &
\omega_2 &= [0.75,1] \times [0,0.25].
\end{aligned}
\]

The final time for the experiment is taken as $T = 1.5$ .

\begin{figure}[h!]
\centering
\includegraphics[width=0.45\textwidth]{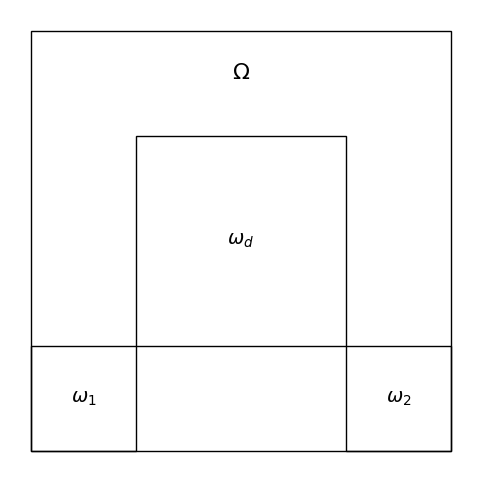}
\caption{The domain $\Omega$ and subdomains $\omega_1, \omega_2, \omega_d$.}
\label{fig:domain}
\end{figure}

\noindent To assess the Nash solution, we consider desired states $w_1 = 1$ and $w_2 = -1$ with problem data $w_0 = 0$ and $f = 1$. Using Algorithm 1 (conjugate gradient Method) with initial guess $(u_{1}^{0}, u_{2}^{0}) = (0, 0)$, the desired functions $w_1$ and $w_2$ are achieved by a Nash equilibrium strategy. The PDE is discretized using the finite element method in space and finite differences for the Caputo time derivative. In the simulations, we set the fractional order to $\gamma = 0.99$.
% We are going to see how much of the goal has been achieved by the Nash state solution to the two given desired functions $w_1$ and $w_2$. We use $w_0 = 0$ and $f = 1$ as the problem's data. We are going to use the \textbf{Algorithm 1}(Conjugate Gradient Method) for that the initial guess is chosen as $(u_{1}^{0}, u_{2}^{0}) = (0, 0)$. The desired functions $w_1 = 1$ and $w_2 = -1$ are achieved by a Nash equilibrium strategy. We use the finite element method to solve the PDE, and Caputo time discretization is done using the finite difference method. The Caputo fractional order $\gamma=0.99$ derivatives are used in the following numerical computations. 

Since $\gamma \approx 1$ means that the Caputo fractional derivative behaves similarly to the usual time derivative. Therefore, we compare the proposed algorithm with the A.M.Ramos et al. \cite{ramos2003numerical} work for the cases of $\mu_1 = \mu_2 = 0$ (No control) and $\mu_1 = \mu_2 = 10^{-4}$. In \textbf{Table 1}, we notice that the $L^2$ error between the Nash solution and the desired functions is quite similar in the domain $\omega \times (0, 1.5)$. In Figure \ref{figure:1}, we observe that, in the observable domain, the $L^2$ norm of the difference between the Nash solution and the first desired function ($w_1 = 1$) initially decreases and then after some time, it remains the same for both cases $\mu_1 = \mu_2 = 0$ and $\mu_1 = \mu_2 = 10^{-4}$ whereas the $L^2$ norm of the Nash solution and second desired function ($w_2 = -1$) in the observable domain is increasing for some time initially and then it remains constant in both cases. Figure \ref{figure:2} shows the Nash solution for the case of no control and $\mu_1 = \mu_2 = 10^{-4}$ at time  $T = 1.5$.

%%%%%%%%%%%%%%%%%%%

% Let us consider PDE problem \eqref{eq:bi-objective_control_equatio} with fractional Caputo derivative and classical Laplacian PDE, $\partial ^ \gamma _t w  -\Delta w=B_1u_1+B_2u_2+1$,  where the domain of experiment is  $\Omega=[0,1] \times [0,1], \omega_1= [0,0.25]\times [0,0.25], \omega_2 = [0.75,1]\times [0,0.25]$ with finite horizon time $T=1.5$. The observable space $w_d=[0.25,0.75] \times [0.25,0.75]$. We are going to see how much of the goal has been achieved by the Nash state solution to the two given desired functions $w_1$ and $w_2$. For the data for the problem, we take $f=1 and w_0=0$. In the conjugate gradient algorithm, we take the initial guess $(u_1^0,u_2^0)=(0,0)$. We have some different desired functions to achieve by Nash equilibrium strategy $w_1 = 1$ and $w_2 = -1$. First, we will see the FEM solution by using $FreeFEM++$ of Nash by taking different regularizing parameters. Take $\mu_1= 10^{-4}, \mu_2= 10^{-4}$. We have taken the Caputo fractional order $\gamma=0.9$ derivatives in the following numerical computations. 

%%%%%%%%%%%%%%%%%%%

The L1 scheme \cite{jin2016analysis,li2018analysis} is widely used to approximate the fractional derivative of Caputo. As an example, consider the nonlinear fractional
ordinary differential equation
\[
\frac{d^{\gamma} w(t)}{dt^{\gamma}}.
\]
For time discretization, the interval \([0, T]\) is divided uniformly with step
size \(\tau\), giving the grid points.
\[
0 = t_0 < t_1 < \dots < t_{j+1} < \dots < t_N = T, 
\quad t_j = j\tau.
\]
Based on this grid, the Caputo fractional derivative can then be discretized using
the L1 scheme.

\noindent Using the L1 scheme, the discretization of the fractional order  is presented as follows:
\[ w(t_{j+1}) = w(t_j) - a_{j-k} [w(t_{k+1}) - w(t_k)] + \tau \Gamma(2 - \gamma) , \quad k = 0 \]
Here, the coefficients \( a_{j-k} \) are defined as:
\[ a_{j-k} = (j+1-k)^{1-\gamma} - (j-k)^{1-\gamma} \]
Here, the Caputo fractional derivative is defined by
\[ C^{\alpha} 1 \int_{t_0}^{t} \frac{w(s)}{(t - s)^{\alpha'}} ds. \]
Let \( v = u - u_0 \). Then (1)-(2) can be written as the following
\[ 0 = D^{\tau} w(t_n) = \Gamma(1 - \alpha) \]
Let \(0 < t_0 < t_1 < \cdots < t_N = T\) be a partition of the time interval \([0, T]\) and \(\tau\) the timestep size. Let \(N \geq 2\). (The case for \(N = 1\) is trivial.) We first need to approximate the Caputo fractional derivative at \(t = t_n\), \(n \geq 2\). By definition, we have
\[
\frac{1}{\Gamma(1-\alpha)} \int_{0}^{t_n} (t_n - s)^{-\alpha} w'(s) ds = \frac{1}{n-1} \sum_{j=0}^{n-1} \int_{t_j}^{t_{j+1}} (t_n - s)^{-\alpha} w'(s) ds.
\]
On each subinterval \([t_j, t_{j+1}]\), we approximate \(w(s)\) by using the piecewise linear interpolation function
\[ P_1(s) = \frac{s - t_{j+1}}{t_j} w(t_j) + \frac{s - t_j}{t_{j+1}} w(t_{j+1}), \quad s \in [t_j, t_{j+1}]. \]
\[
D_t^{\alpha}w(t_n) \approx \frac{1}{n-1} \sum_{j=0}^{n-1} \int_{t_j}^{t_{j+1}} (t_n - s)^{-\alpha} w'(s) ds. (t_n - s)^{-\alpha} P_1'(s) ds,
\]
where the weights are defined by, with \(n \geq 2\),
\[
w_{j,n} = \begin{cases}
1, & \text{if } j = 0, \\
2^{1-\alpha} - 2(j-1)^{1-\alpha}, & \text{if } j = 1, \\
(j+1)^{-\alpha} - 2j^{-\alpha} + (j-1)^{-\alpha}, & \text{if } 2 \leq j \leq n-1, \\
\end{cases}
\]

\begin{table}[ht] 
    \begin{center}
        \begin{tabular}{|c|c|c|c|}
            \hline
            & Error & No Control & $\mu_1 = \mu_2 = 10^{-4}$ \\ 
            \hline
            A. M. Ramos ( with $\gamma=1$) \cite{ramos2003numerical} & $\|\bar w(t)-w_{1}\|_{L^2(\omega_{d} \times (0, 1.5))}$ & 0.330592 & 0.343811 \\
            & $\|\bar w(t)-w_{2}\|_{L^2(\omega_{d} \times (0, 1.5))}$ & 0.422275 & 0.420292 \\
            \hline
            Present Work (with $\gamma=0.99$) & $\|\bar w(t)-w_{1}\|_{L^2(\omega_{d} \times (0, 1.5))}$ & 0.330583 & 0.3405192 \\
            & $\|\bar w(t)-w_{2}\|_{L^2(\omega_{d} \times (0, 1.5))}$ & 0.422284 & 0.4292397\\
            \hline
        \end{tabular}
    \end{center}   
\caption{Norm of distance between Nash solution and desired function} \label{tab1}
\end{table}

\begin{figure}
    \centering
    \begin{subfigure}[t]{0.49\textwidth}
    \centering
    \includegraphics[width=1\textwidth]{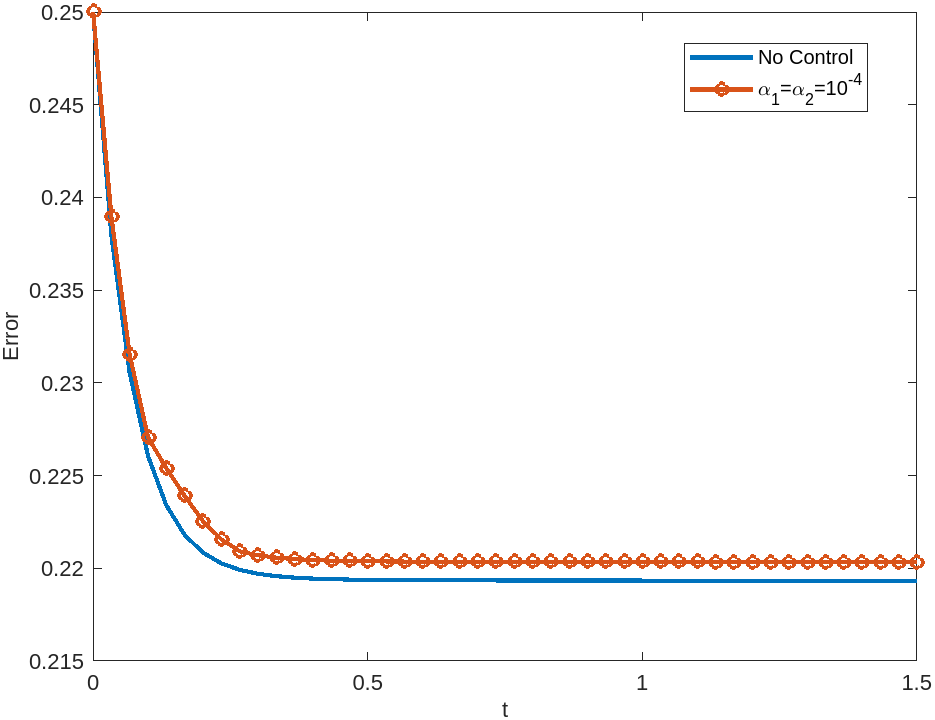}
    \caption{$\|\bar w(t)-w_{1}\|_{L^{2}(\omega_d)}^2$ }
    \label{figure:1(a)}
    \end{subfigure}
    \hfill
    \begin{subfigure}[t]{0.49\textwidth}
    \centering
    \includegraphics[width=1\textwidth]{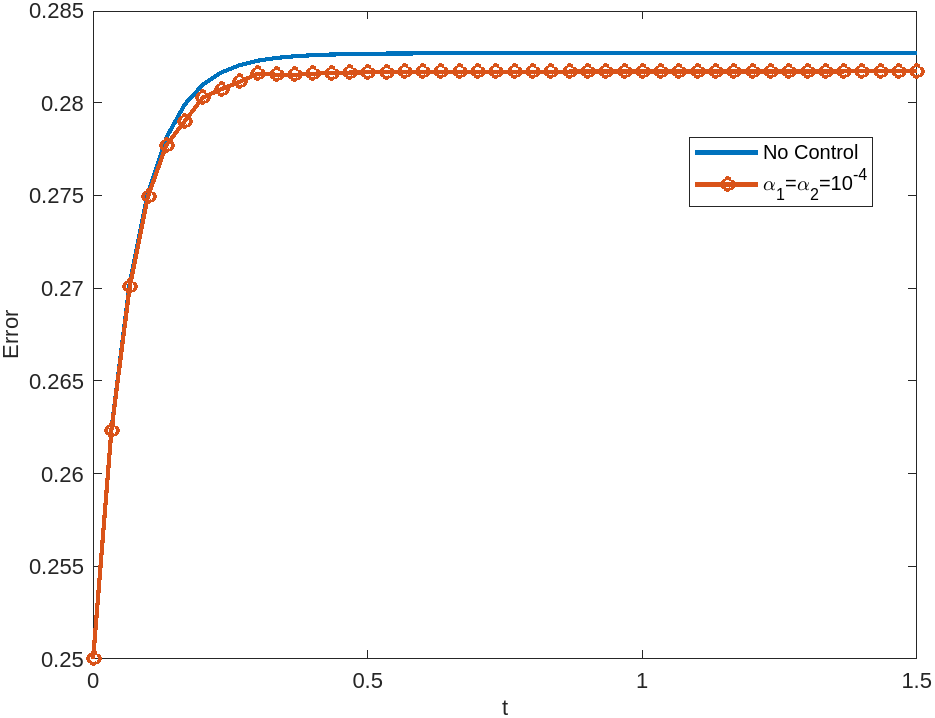}
    \caption{$\|\bar w(t)-w_{2}\|_{L^{2}(\omega_d)}^2$ }
    \label{figure:1(b)}
    \end{subfigure}
    \caption{}
    \label{figure:1}
\end{figure}

\begin{figure}
    \centering
    \begin{subfigure}[t]{0.49\textwidth}
    \centering
    \includegraphics[width=1\textwidth]{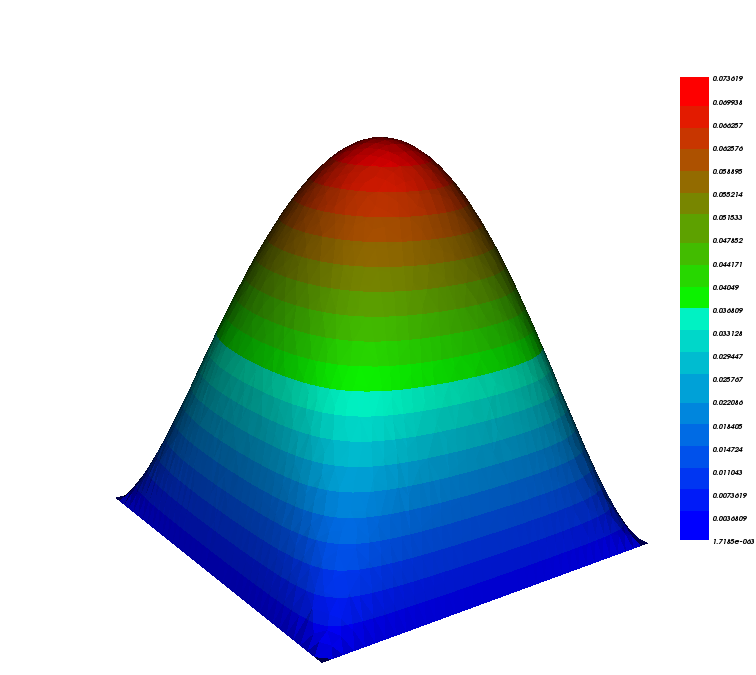}
    \caption{Non-Controlled Solution at $T=1.5$}
    \label{figure:2(a)}
    \end{subfigure}
    \hfill
    \begin{subfigure}[t]{0.49\textwidth}
    \centering
    \includegraphics[width=1\textwidth]{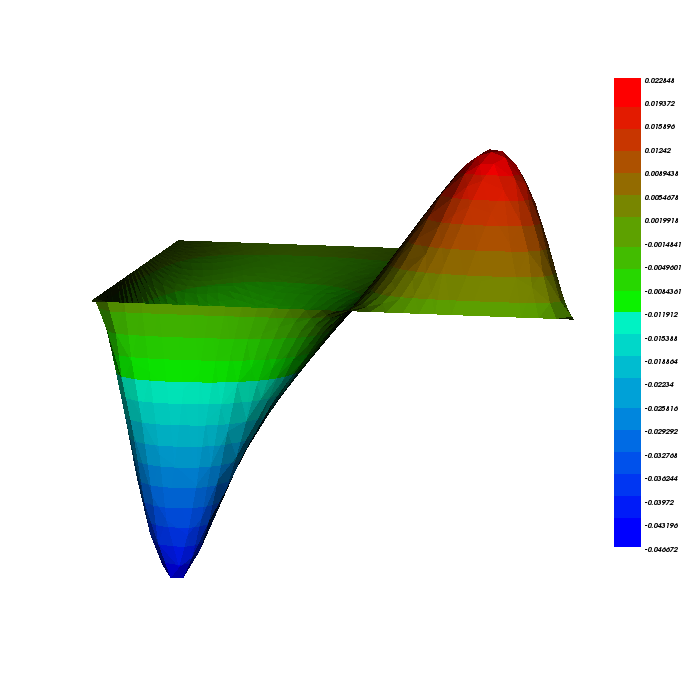}
    \caption{Computed Solution of Controlled equation at $T=1.5$ for $\mu_1 = 10^{-4}$ and $\mu_2 = 10^{-4}$ }
    \label{figure:2(b)}
    \end{subfigure}
    \caption{}
    \label{figure:2}
\end{figure}

\end{example}

\newpage
\begin{example}\label{ex:2}
Let us consider the bi-objective optimal problem \eqref{eq:bi-objective_control_equation} time-independent fractional Laplacian PDE defined for $i = 1,2$ 
$$(-\Delta)^s w = B_1 u_1 + B_2 u_2 + f, ~~~ \text{in} \Omega $$
The computational domains are considered the same as Figure~\ref{fig:domain} in Example 5.1.

The data for the problem is taken as $f = 1$ and $w_0 = 0$. We are going to see how much of the goal has been achieved by the Nash state solution to the two given desired functions $w_1 = 1$ and $w_2 = -1$. We are using the conjugate gradient algorithm with initial data as $(0, 0)$. For solving the fractional Laplacian PDE, we are going to use the finite difference method (see \cite{duo2019novel}). The Conjugate gradient method is used to solve the optimal control problem in $MATLAB$  for the Nash equilibrium state by taking different fractional orders of $s$ in the fractional Laplacian operator. Take $\mu_1= 10^{-4}, \mu_2= 10^{-4}$. In the following numerical computations, we have used the fractional Laplacian. Table~\ref{tab2} gives a description of $L^2(\Omega)$ error of the state solution and first ($w_1 = 1$) and second ($w_2 = -1$) desired function for the various values of $s$, where $s$ is the order of fractional Laplacian. Table~\ref{tab2} shows that the $L^2$ errors of the state solution and desired functions are getting smaller when $s$ tends to $1$.

%%%%%%%%%%%%%%%%%%%%%%%%%%%

% Let us consider PDE problem \eqref{eq:bi-objective_control_equatio} with time independent and fractional Laplacian PDE, $(-\Delta)^s w_i=B_1u_1+B_2u_2+1$, for $i=1,2$, where the domain of experiment is  $\Omega = [0,1] \times [0,1], \omega_1= [0,0.25]\times [0,0.25], \omega_2 = [0.75,1] \times [0,0.25]$. The observable space $w_d=[0.25,0.75] \times [0.25,0.75]$. We are going to see how much of the goal has been achieved by the Nash state solution to the two given desired functions $w_1$ and $w_2$. For the data for the problem, we take $f=1, w_0=0$. In the conjugate gradient algorithm, we take the initial guess $(u_1^0,u_2^0)=(0,0)$. We have some different desired functions to achieve by Nash equilibrium strategy $w_1= 1$ and $w_2=-1$. First, we will see the FDM for computing \cite{duo2019novel} the two-dimensional fractional Laplacian. Then, the Conjugate gradient method is used to solve the optimal control problem in $MATLAB$  for  Nash equilibrium state by taking different fractional orders of $s$ in the fractional Laplacian operator. Take $\mu_1= 10^{-4}, \mu_2= 10^{-4}$. In the following numerical computations, we have used fractional Laplacian.

\begin{table}[ht] 
\begin{center}
\begin{tabular}{|c|c|c|c|}
\hline $\mathbf{2s}$  & $ \mathbf{\|\bar w-w_{1}\|_{L^2(\Omega)}}$  & $\mathbf{ \|\bar w-w_{2}\|_{L^2(\Omega)}}$  & \textbf{Tolerance} \\
\hline 1.95 & 3.6457e-09    & 3.6457e-09    & 8.1465e-10\\
\hline 1.9	& 8.1643e-09    & 8.1643e-09	& 2.6434e-09\\
\hline 1.85	& 6.6071e-09	& 6.6071e-09	& 7.6575e-10\\
\hline 1.8	& 3.3371e-08	& 3.3371e-08	& 4.8492e-08\\
\hline 1.75	& 4.0767e-10	& 4.0767e-10	& 1.7168e-10\\
\hline 1.70	& 1.4566e-09	& 1.4566e-09	& 1.2755e-09\\
\hline 1.65	& 6.6352e-09	& 6.6352e-09	& 6.9281e-09\\
\hline 1.6	& 2.3116e-09	& 2.3116e-09	& 1.5905e-10\\
\hline 1.55	& 6.6989e-09	& 6.6989e-09	& 1.0847e-09\\
\hline 1.5	& 2.6565e-06	& 2.6565e-06	& 1.1093e-07\\
\hline 1.45	& 2.6856e-08	& 2.6856e-08	& 1.6230e-09\\
\hline 1.40	& 4.5396e-07	& 4.5396e-07	& 1.0716e-07\\
\hline 1.35	& 5.7719e-04	& 5.7719e-04	& 4.0767e-07\\
\hline 1.30	& 4.0564e-04	& 4.0564e-04	& 1.6920e-07\\
\hline 1.25	& 8.1159e-06	& 8.1159e-06	& 1.0746e-07\\
\hline 1.2	& 1.0879e-05	& 1.0879e-05	& 2.8224e-08\\
\hline 1.15	& 1.5509e-06	& 1.5509e-06	& 4.1217e-10\\
\hline 1.10	& 1.1366e-04	& 1.1366e-04	& 3.2026e-06\\
\hline 1.05	& 1.1904e-04    & 1.1904e-04	& 6.9962e-07\\
\hline 1	& 1.4001e-05	& 1.4001e-05	& 4.2669e-08\\
\hline 
\end{tabular}
\end{center}
\vspace{.1in}
\caption{Norm of distance between Nash solution and desired function when fractional Laplacian PDE is considered} \label{tab2}
\end{table}

\end{example}

\begin{example}\label{ex:3}
We consider a one-dimensional fractional space-time parabolic Nash equilibrium problem 
on the domain $\Omega = (-L, L)$ with $L=2$, discretized using $N=40$ interior nodes. 
The final time is $T=0.5$, with $M=80$ uniform time steps. 
The Caputo time derivative is taken of order $\gamma \in \{0.60, 0.70, 0.80, 0.90\}$, 
and the spatial fractional Laplacian has order $s \in \{0.25, 0.5, 0.75\}$. 

The state solution is initialized with $w_0(x) \equiv 0$ and subjected to a constant 
source term $f(x) \equiv 1$. The desired states are defined as characteristic functions on the subdomains $\omega_1$ and $\omega_2$
\[
w_{1}(x) =
\begin{cases}
1, & x \in \omega_1, \\
0, & \text{otherwise},
\end{cases}
\qquad
w_{2}(x) =
\begin{cases}
1, & x \in \omega_2, \\
0, & \text{otherwise},
\end{cases}
\]
where $\omega_1$ and $\omega_2$ denote the left and right quarter subdomains of $\Omega$, respectively. 
The control functions $u_1$ and $u_2$ are spatially distributed, time-independent, 
and initially taken as unit vectors supported in $\omega_1$ and $\omega_2$. 
The regularization parameters are $\mu_1 = \mu_2 = 10$.

The optimization is performed using a conjugate gradient (CG) algorithm: the forward 
state is computed using the L1 scheme for the left Caputo derivative, and the adjoint 
equation is solved by time reversal with the right Caputo derivative. Gradients are 
assembled from the adjoints, and the controls are iteratively updated.

Table~\ref{tab:fractional_errors_0} summarizes the $L^2$-errors for the forward state 
at final time for different combinations of $(\gamma, s)$.

\begin{table}[h!]
\centering
\begin{tabular}{|c|c|c|c|}
\hline 
$\gamma$ & $s$ & $\|\bar w - w_{1}\|_{L^2(\Omega)}$ & $\|\bar w - w_{2}\|_{L^2(\Omega)}$ \\ \hline
0.60 & 0.50 & 6.676e-01 & 6.676e-01 \\ \hline
0.60 & 1.00 & 6.191e-01 & 6.191e-01 \\ \hline
0.60 & 1.50 & 6.282e-01 & 6.282e-01 \\ \hline
0.70 & 0.50 & 6.608e-01 & 6.608e-01 \\ \hline
0.70 & 1.00 & 6.193e-01 & 6.193e-01 \\ \hline
0.70 & 1.50 & 6.282e-01 & 6.282e-01 \\ \hline
0.80 & 0.50 & 6.523e-01 & 6.523e-01 \\ \hline
0.80 & 1.00 & 6.196e-01 & 6.196e-01 \\ \hline
0.80 & 1.50 & 6.281e-01 & 6.281e-01 \\ \hline
0.90 & 0.50 & 6.420e-01 & 6.420e-01 \\ \hline
0.90 & 1.00 & 6.195e-01 & 6.195e-01 \\ \hline
0.90 & 1.50 & 6.279e-01 & 6.279e-01 \\ \hline
\end{tabular}
\caption{Forward state errors for different $(\gamma,s)$ at final time $T$.}
\label{tab:fractional_errors_0}
\end{table}

\end{example}

\begin{example}\label{ex4}
This example considers the same fractional Nash problem with modified desired targets $w_1 = 1$ on $\omega_1$ and $w_2 = -1$ on $\omega_2$, where $\omega_1$ and $\omega_2$ are the same as in Example 3. The spatial domain, discretization, time stepping, and Caputo and Laplacian orders remain as in the previous example.

Table~\ref{tab:fractional_errors} reports the errors of the computed Nash solutions for various combinations of $(\gamma, s)$. The errors are computed using the Euclidean norm $\|w - w_i\|_2$. 

\begin{table}[h!]
\centering
\begin{tabular}{|c|c|c|c|}
\hline 
$\gamma$ & $s$ & $\|\bar w - w_{1}\|_{L^2(\Omega)}$ & $\|\bar w - w_{2}\|_{L^2(\Omega)}$ \\ 
\hline
0.60 & 0.25 & 3.252 & 5.346 \\ \hline
0.60 & 0.50 & 3.013 & 4.776 \\\hline
0.60 & 0.75 & 2.857 & 4.239 \\\hline
0.70 & 0.25 & 3.173 & 5.237 \\\hline
0.70 & 0.50 & 2.983 & 4.734 \\\hline
0.70 & 0.75 & 2.853 & 4.235 \\\hline
0.80 & 0.25 & 3.086 & 5.109 \\\hline
0.80 & 0.50 & 2.945 & 4.678 \\\hline
0.80 & 0.75 & 2.844 & 4.225 \\\hline
0.90 & 0.25 & 2.995 & 4.965 \\\hline
0.90 & 0.50 & 2.900 & 4.605 \\\hline
0.90 & 0.75 & 2.831 & 4.205 \\\hline
\end{tabular}
\caption{Nash solution errors for different $\gamma$ and $s$.}
\label{tab:fractional_errors}
\end{table}

As observed, the errors are larger than in the previous example due to the opposite signs of the target functions in $\omega_1$ and $\omega_2$. Nonetheless, the results indicate partial convergence toward the desired Nash states, with slightly improved approximation as $\gamma$ and $s$ increase.
\end{example}

\section{Conclusion}
The Nash equilibrium for a non-cooperative differential game with two players (controllers) who each aim to minimize their own cost functional has been examined in this work. Space-time fractional partial differential equations that incorporate the fractional Laplacian in space and the Caputo time derivative govern the system. We have established the existence and uniqueness of the Nash equilibrium under appropriate conditions and shown that the equilibrium can be characterized as the solution to a bi-objective optimal control problem. Our numerical results of the experiment, which are compiled in Tables \ref{tab1}, \ref{tab2}, \ref{tab:fractional_errors_0}, and \ref{tab:fractional_errors}, support the theoretical conclusions and show how well the suggested computational  works to solve these optimal fractional control problems.

\section{Future Work}
In the future, we plan to extend this study to a fractional Nash–Stackelberg setting, where one player acts as a leader and the others as followers. This method is inspired by the hierarchical control strategies introduced by J.-L. Lions in \cite{lions1994remarks}, and we aim to explore its extension in the fractional PDE Model. As a natural extension, we are interested in exploring fractional PDE models within the  Stackelberg–Nash strategies \cite{diaz1994stackelberg}, where one leader interacts with multiple followers seeking a Nash equilibrium.

\section*{Declarations}

\textbf{Conflicts of Interest:} The authors declare that they have no conflicts of interest.

\end{document}